\newtheorem{Lemma}{Lemma}[section]
\newtheorem{Proposition}{Proposition}[section]
\newtheorem{Theorem}{Theorem}[section]
\title{Atomic disintegrations for partially hyperbolic diffeomorphisms}
\author[{\tiny{Ale Jan Homburg}}]{Ale Jan Homburg}
\address{KdV Institute for Mathematics, University of Amsterdam, Science park 904, 1098 XH Amsterdam, Netherlands}
\address{Department of Mathematics, VU University Amsterdam, De Boelelaan 1081, 1081 HV Amsterdam, Netherlands}
\email{a.j.homburg@uva.nl}
\begin{document}

\begin{abstract}
Shub \& Wilkinson and Ruelle \& Wilkinson studied a class of volume preserving 
diffeomorphisms on the three dimensional torus
that are stably ergodic. The diffeomorphisms are  partially hyperbolic
and admit an invariant central foliation of circles.
The foliation is not absolutely continuous, in fact, 
Ruelle \& Wilkinson established that  
the disintegration of volume along central leaves is atomic.
We show that in such a class of volume preserving diffeomorphisms  
the disintegration of volume along central leaves is a single delta measure.
We also formulate a general result for conservative three dimensional 
skew product like diffeomorphisms on circle bundles, providing conditions for delta measures as disintegrations
of the smooth invariant measure. 
\\

\noindent MSC 37C05, 37D30
\end{abstract}

\maketitle 

\section{Introduction}


We consider volume preserving perturbations of
the following diffeomorphisms on the three dimensional torus 
$\mathbb{T}^3 = (\mathbb{R}/\mathbb{Z})^3$: 
\begin{align} \label{e:A3}
(x,y,z) &\mapsto (A(x,y) , z),
\end{align}
where $A \in \operatorname{GL} (2,\mathbb{Z})$ is a hyperbolic torus automorphism.

The interest in these systems stems from their role in the study of stable ergodicity.
Indeed, Shub \& Wilkinson \cite{MR1738057} show the existence, 
arbitrarily close to \eqref{e:A3}, of a $C^1$ open set of $C^2$ volume preserving diffeomorphisms 
that are ergodic with respect to volume.
Stable ergodicity has since been shown to occur abundantly in conservative partially hyperbolic diffeomorphisms \citep{pugshu04,burwil10}.

Our interest comes from the phenomenon of Fubini's nightmare \cite{mil97} 
that appears in these diffeomorphisms and is related to non absolutely continuous foliations. 
By classical work on normal hyperbolicity \cite{MR0501173},
perturbations of \eqref{e:A3} admit an invariant center foliation with leaves that are
circles close to $\{ (x,y) = \text{constant} \}$ (which is the invariant center foliation for \eqref{e:A3}).   
The diffeomorphisms studied in \cite{MR1738057} are shown by Ruelle \& Wilkinson \cite{MR1838747} to possess a set of full Lebesgue measure
that intersects almost every circle from the center foliation in $k$ points for some finite integer $k$.
The number $k$ remained unspecified in their result. We will show that the result in \citep{MR1838747} is true with $k=1$.
We thus get robust examples of conservative diffeomorphisms on $\mathbb{T}^3$ with a center foliation of circles and   
an invariant set of full Lebesgue measure that intersects almost every center leaf in a single point.



The theorem below  recalls the results of \citep{MR1738057,MR1838747}.
Note that the center Lyapunov exponent $\lambda^c$  in the formulation of the theorem 
is negative,
the inverse diffeomorphisms possess a positive center Lyapunov exponent
as in \cite{MR1738057}. Also, \cite{MR1738057} takes $A = \left( \begin{array}{cc} 2 & 1 \\ 1 & 1 \end{array}\right)$; the extension to 
arbitrary hyperbolic torus automorphisms is in \cite[Section~7.3.1]{bondiavia05}.                                                                  
                                                              
\begin{Theorem}[\citep{MR1738057,MR1838747}]\label{t:dis1}
In any neighborhood of \eqref{e:A3} there is a $C^1$ open set $\mathcal{U}$ of
volume preserving diffeomorphisms on $\mathbb{T}^3$,
so that for each $C^2$ diffeomorphism $F \in \mathcal{U}$,
\begin{enumerate}[label=(\roman*)]
 \item \label{i:1} $F$ is ergodic with respect to Lebesgue measure;
 \item \label{i:2} there is an invariant center foliation of $C^2$ circles $W^c (p)$, $p \in \mathbb{T}^3$, so that
 for Lebesgue almost all $p$, if $v \in T_p W^c(p)$, then
 \begin{align*}
 \lim_{n \to \infty} \frac{1}{n} \ln |D F^n (p) v| = \lambda^c
 \end{align*}
for some $\lambda^c < 0$;
 \item \label{i:3}
 for some positive integer $k$, 
the disintegrations of Lebesgue measure along center leaves are point measures consisting of $k$ points
with mass $\frac{1}{k}$
(in particular, there is an invariant set of full Lebesgue measure in $\mathbb{T}^3$ that
 intersects almost every center leaf in $k$ points).
\end{enumerate}
\end{Theorem}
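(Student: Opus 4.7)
The plan is to split the theorem into its three items, drawing on partial hyperbolicity, normal hyperbolicity and Pesin theory. Items (i) and (ii) combine Shub \& Wilkinson's construction of volume preserving perturbations of \eqref{e:A3} having non-trivial centre Lyapunov exponent with standard persistence and ergodicity theorems; item (iii) is the essential Ruelle--Wilkinson consequence and is the only step requiring genuinely new ideas.

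For (ii) I would start from the Hirsch--Pugh--Shub theorem. The skew product \eqref{e:A3} is partially hyperbolic with a normally hyperbolic centre foliation by circles, so for every $C^1$-small $C^2$ perturbation $F$ the centre foliation persists with $C^2$ leaves close to the original circles. Shub \& Wilkinson exhibit an explicit perturbation for which $\int \log \|DF|_{E^c}\|\, dm < 0$, and Birkhoff's ergodic theorem then yields a strictly negative centre Lyapunov exponent on a positive measure set. The same family of perturbations is used to check the hypotheses of the Pugh--Shub/Burns--Wilkinson stable ergodicity theorem, giving (i): partial hyperbolicity and $C^2$ regularity are automatic, centre bunching is trivial because $\|DF|_{E^c}\|\approx 1$, and essential accessibility is arranged by designing the perturbation so that a short composition of stable and unstable holonomies moves points non-trivially along the centre. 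Ergodicity then upgrades the negative centre exponent to a constant value $\lambda^c<0$ almost everywhere.

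The core of the proof is (iii). Using the $F$-invariance of the centre foliation $\mathcal{F}^c$, I would disintegrate Lebesgue measure $m$ along $\mathcal{F}^c$ to obtain a measurable family of conditional probability measures $\{m_p\}$ with $m_p$ supported on $W^c(p)$; $F$-invariance of both $m$ and $\mathcal{F}^c$ yields the equivariance $F_*m_p=m_{F(p)}$ for $m$-almost every $p$. The key geometric input is $\lambda^c<0$ from (ii): on a positive measure Pesin block $\Lambda$, each point has a uniformly sized centre Pesin stable manifold along which $F$ contracts at rate $\approx e^{\lambda^c}$. The hard step, and the heart of the Ruelle--Wilkinson argument, is to translate this leafwise contraction into discreteness of $\mathrm{supp}(m_p)$. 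I would argue by contradiction: if on a positive measure set $\mathrm{supp}(m_p)$ had an accumulation point, Poincar\'e recurrence into $\Lambda$ would force two nearby support points on the same leaf to have their forward orbits approach one another exponentially, incompatible with equivariant preservation of the atomic structure by $F_*$. Hence $\mathrm{supp}(m_p)$ is discrete, and compactness of the leaf (a circle) makes it finite; setting $k(p):=|\mathrm{supp}(m_p)|$, measurability and $F$-invariance together with ergodicity from (i) give $k(p)\equiv k$. Finally, applying the same equivariance argument to the ordered sequence of atomic weights produces an $F$-invariant measurable function on $\mathbb{T}^3$, which by ergodicity is constant, so each atom carries mass $1/k$. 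The technical obstruction I anticipate is the rigorous passage from ``accumulating support'' to ``forced collision of atoms'', which demands a careful measurable selection inside $\mathrm{supp}(m_p)$ and precise control of how Pesin stable sets intersect the supports along orbits returning to $\Lambda$.
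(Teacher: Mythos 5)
This theorem is quoted from Shub--Wilkinson \cite{MR1738057} and Ruelle--Wilkinson \cite{MR1838747}; the paper does not reprove it, so the comparison must be against those sources and against the way the paper packages their results (Proposition~\ref{p:pesin} and Lemma~\ref{l:convergence}). Your outline of items (i) and (ii) is a fair summary of the standard route: Hirsch--Pugh--Shub persistence of the normally hyperbolic circle foliation, the explicit Shub--Wilkinson perturbation making the leafwise integral of $\log\|DF|_{E^c}\|$ nonzero (negative after passing to the inverse diffeomorphism, as the paper notes), and stable ergodicity via accessibility and centre bunching.

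Your argument for (iii), however, has a genuine gap. You assert that if $\mathrm{supp}(m_p)$ had an accumulation point, recurrence into a Pesin block would bring two nearby support points exponentially close under $F^n$, and that this is \emph{incompatible with equivariant preservation of the atomic structure by $F_*$}. But there is no atomic structure to preserve until atomicity is established --- that is precisely what you are trying to prove. Two support points drawing exponentially close under $F^n$ is entirely consistent with the equivariance $F_*m_p=m_{Fp}$ when $m_p$ is Lebesgue on the circle, or indeed any non-atomic measure; the image support simply concentrates, and no contradiction follows. What is actually used (and what is encoded in Proposition~\ref{p:pesin} of this paper) is quantitative and exploits the \emph{measure} of the Pesin block, not merely recurrence into it. One picks $\Lambda$ with $\mathrm{vol}(\Lambda)>1-\delta$; by the disintegration identity $\mathrm{vol}(\Lambda)=\int m_q(\Lambda\cap W^c(q))\,d\nu^c(q)$ one gets $m_q(\Lambda\cap W^c(q))>1-\sqrt{\delta}$ off a set of $q$ of measure at most $\sqrt{\delta}$; moreover $\Lambda\cap W^c(q)$ is covered by a uniformly bounded number $N\approx 1/R$ of Pesin stable intervals, each of which $F^n$ contracts to length at most $C\ell^n$. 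Applying this at $q=F^{-n}p$ and pushing forward by $F^n$ shows that $m_p$ assigns mass $>1-\sqrt{\delta}$ to at most $N$ intervals of total length at most $NC\ell^n$; letting $n\to\infty$ along a recurrent subsequence and then $\delta\to 0$ yields purely atomic $m_p$. Your concluding step --- ergodicity forcing the atom count $k$ to be constant and the weights all equal to $1/k$ --- is fine once atomicity is in hand, but the passage from centre contraction to atomicity cannot be made by the contradiction you describe.
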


The arguments followed by Ruelle \& Wilkinson
involve Pesin theory, in particular
the construction of local unstable manifolds in nonuniformly 
hyperbolic systems. 
With such methods it is not clear how to obtain further information
on the number of atoms $k$. 
As mentioned above, we show that Theorem~\ref{t:dis1} holds with $k=1$.

\begin{Theorem}\label{t:openk=1}
In any neighborhood of $(0,0)$ there is a $C^1$ open set $\mathcal{U}$ of  
volume preserving diffeomorphisms on $\mathbb{T}^3$,
so that each $C^2$ diffeomorphism  $F \in \mathcal{U}$ satisfies properties \ref{i:1}, \ref{i:2} of Theorem~\ref{t:dis1} and furthermore
\begin{enumerate}[label=(\roman*)]
 \item[(iii)] the disintegrations of Lebesgue measure along center leaves are delta measures
(in particular, there is an invariant set of full Lebesgue measure in $\mathbb{T}^3$ for $F$ that
 intersects almost every center leaf in a single point).
\end{enumerate}
\end{Theorem}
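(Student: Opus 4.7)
\medskip\noindent\textbf{Proof plan.}
I would split the argument into two components. First, reduce the problem to a statement on conservative skew-product-like diffeomorphisms on circle bundles, using the $C^2$ invariant center foliation of circles supplied by Theorem~\ref{t:dis1}\ref{i:2}. After parameterising the leaves (globally, possibly via a continuous or measurable conjugacy that flattens the center foliation to the vertical), $F$ takes the form $(x,y,z)\mapsto (f(x,y), g_{x,y}(z))$ with $f$ conjugate to a hyperbolic automorphism of $\mathbb{T}^2$ and $g_{x,y}$ a family of circle diffeomorphisms. Second, prove the general result announced in the abstract: for such conservative skew products, ergodicity together with $\lambda^c<0$ forces the disintegration along center fibers to be a single delta measure. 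Theorem~\ref{t:openk=1} then follows by applying this general result to $F\in\mathcal U$, with openness of $\mathcal U$ inherited from openness of the hypotheses (both ergodicity and $\lambda^c<0$ are already available on a $C^1$-open set by Theorem~\ref{t:dis1}).

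For the general skew-product statement, I would argue by contradiction: suppose the disintegration is $k$-atomic with $k\ge 2$, and let $\Lambda$ be the full-measure invariant set meeting almost every center leaf in exactly $k$ points (Theorem~\ref{t:dis1}\ref{i:3}). My planned steps are:
\begin{enumerate}[label=(\alph*)]
\item Show that $\Lambda$ is saturated by stable and unstable manifolds in the a.e.\ sense; since $\lambda^c<0$, stable holonomies between nearby center leaves must preserve the atomic structure of the conditional measures, and the analogous argument for $F^{-1}$ (whose center exponent is positive) yields unstable saturation.
\item Label the atoms measurably as $\alpha_1(x,y),\dots,\alpha_k(x,y)$ to realise $\Lambda$ as a measurable $k$-point extension of the base Anosov system $(\mathbb{T}^2,f)$, classified by a permutation cocycle $\sigma\colon\mathbb{T}^2\to S_k$ over $f$.
\item Use $su$-accessibility, which is $C^1$-open and dense for partially hyperbolic perturbations of~\eqref{e:A3}, combined with the rigidity of cocycles over Anosov systems, to show that $\sigma$ is cohomologically trivial. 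The $k$-extension then splits into $k$ copies of $(\mathbb{T}^2,f)$, contradicting ergodicity of $F|_\Lambda$ unless $k=1$.
\end{enumerate}

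The main obstacle is step (c): $\Lambda$ is a priori only a measurable set, so the classical Liv\v{s}ic rigidity for H\"older cocycles over Anosov systems does not apply off the shelf. I would aim for a Liv\v{s}ic-type statement tailored to discrete-valued (permutation) cocycles, which should be more tractable than for real-valued ones, because $su$-path invariance combined with connectedness of the base forces a locally constant cocycle to be constant. A complementary, more geometric route, well-suited to the open-set conclusion, is to argue that persistence of a $k$-atomic disintegration for all $F$ in a $C^1$-neighborhood of~\eqref{e:A3} would force each such $F$ to commute (along center leaves) with a $\mathbb Z_k$-symmetry identifying the $k$ atoms, and that such equivariance is not a $C^1$-open condition on the space of volume-preserving perturbations. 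Either route yields $k=1$ throughout the open set $\mathcal U$.
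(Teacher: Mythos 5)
Your plan rests on the claim that, for conservative skew-product-like diffeomorphisms near~\eqref{e:A3}, \emph{ergodicity together with $\lambda^c<0$ alone forces the disintegration along center fibers to be a single delta measure}. This is false, and the paper itself exhibits the obstruction: replacing $j$ in~\eqref{e:jh} by $(x,y,z)\mapsto (x + (1+\sqrt5)a\cos(2\pi qz), y + 2a\cos(2\pi qz), z)$ with $q\ge 2$ produces ergodic, volume-preserving diffeomorphisms with $\lambda^c<0$ that carry a $\mathbb{Z}_q$-symmetry $F(x,y,z+1/q)=F(x,y,z)+(0,0,1/q)$, and Katok's observation (quoted in Ruelle--Wilkinson) forces $k$ to be a multiple of $q$. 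So no theorem of the shape you are aiming for in step (c) can hold; extra dynamical hypotheses beyond ergodicity, negative center exponent and accessibility are genuinely needed.

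The concrete place where your argument breaks is step (c). The permutation cocycle $\sigma\colon\mathbb{T}^2\to S_k$ obtained from the atoms is only measurable, and $su$-holonomy invariance does not upgrade it to a locally constant map: the atoms of the disintegration depend measurably, not continuously, on the leaf, and one only has defined holonomy up to an a priori nontrivial monodromy along $su$-loops. Even granting local constancy, a \emph{constant} permutation cocycle $\sigma\equiv\tau$ with $\tau$ a $k$-cycle is not cohomologically trivial and yields an ergodic $k$-point extension rather than a splitting into $k$ copies of the base — so the dichotomy you propose (trivial cocycle or non-ergodicity) is not a dichotomy. The $\mathbb{Z}_q$-symmetric examples are exactly of this type. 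Your alternative ``geometric route'' replaces this with the assertion that a $k$-atomic disintegration on an open set would force an honest $\mathbb{Z}_k$-equivariance of each $F$, but this is a heuristic, not an argument: the atom set is measurable and there is no mechanism producing a smooth symmetry of $F$ from it.

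The paper's actual mechanism is quite different and explains what the missing ingredients are. Beyond ergodicity and $\lambda^c<0$, it uses (Lemma~\ref{l:PQopen}) \emph{minimality of the strong stable and strong unstable foliations}, obtained robustly via conservative blenders, and \emph{a fixed center leaf on which $F$ is Morse--Smale} with attracting $P$ and repelling $Q$ and the eigenvalue inequality $\lambda^u(Q)\lambda^c(Q)>\lambda^u(P)$. It then constructs a Markovian factor $F^+$ by collapsing local strong stable leaves (Proposition~\ref{l:goro} and the paragraph following), realizes $F$ as the natural extension of $F^+$, and shows that the disintegration $\mu_p$ is the weak-star limit of $F^n|_{W^c(F^{-n}p)}\mu^+_{F^{-n}p}$ (Lemma~\ref{l:convergence}). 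The heart of the proof (Lemmas~\ref{l:delta2} and~\ref{l:onehalf}) uses minimality of $W^u$ to push a large arc into a thin strip around $W^c(P)$ in bounded time, and the eigenvalue inequality (through local linearization at $P$ and $Q$) to guarantee that more than half of the $\mu^+$-mass of a backward-iterated fiber collects near $P$; combined with Ruelle--Wilkinson's $k$ equal atoms of mass $1/k$, this forces $k=1$. None of these ingredients — minimality of the invariant foliations, the Morse--Smale fixed leaf, the spectral gap inequality, the natural-extension disintegration machinery — appear in your plan, and the $\mathbb{Z}_q$-symmetric examples show they cannot be dispensed with.
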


The study in \cite{MR1838747} provides a specific two parameter family of diffeomorphisms for which Theorem~\ref{t:dis1}
is shown to hold.
Define $F_{a,b} = (j \circ h)^{-1}$ with
\begin{align}
\nonumber
h(x,y,z) & = (2x + y , x + y , z + x + y + b \sin (2 \pi y)),
\\
\label{e:jh}
j(x,y,z) &= (x +  (1 +  \sqrt{5}) a \cos(2\pi z) ,y + 2a \cos(2\pi z),z).
\end{align} 
For $a,b = 0$, $F_{0,0}$ can be brought to a form  \eqref{e:A3} by a linear coordinate change.
By \cite{MR1838747}, $F_{a,b}$ for small nonzero values of $a,b$ satisfies the conclusions of Theorem~\ref{t:dis1}.
We show that atomic disintegrations with $k=1$ occur within this family.
Specific to the two parameter family of diffeomorphisms $F_{a,b}$ is the existence of a smooth center stable foliation.
This makes the argument more straightforward.

\begin{Lemma}\label{l:affine}
 The center stable foliation of $F_{a,b}$ is the affine foliation with leaves tangent to the planes spanned by $v_0 = (1+\sqrt{5} , 2 , 0)$ and $(0,0,1)$. 
\end{Lemma}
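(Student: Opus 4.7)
The plan is to identify a linear functional $\ell$ whose level sets are exactly the affine planes named in the statement, show directly that the resulting foliation is $F_{a,b}$-invariant, and then identify this foliation with the center stable foliation via a dominated-splitting argument.

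Set $\ell(x,y,z) = 2x - (1+\sqrt{5})y$; its kernel is $\mathrm{span}\{v_0,(0,0,1)\}$, so the claimed affine foliation is the foliation by level sets of $\ell$. It is convenient to work with $F_{a,b}^{-1} = jh$. Two structural features make the computation of $\ell\circ jh$ immediate. First, the $b\sin(2\pi y)$ term in $h$ sits entirely in the $z$-component, and $\ell$ is independent of $z$; so $\ell\circ h$ coincides with $\ell$ composed with the linear part of $h$. Second, the perturbation in $j$ adds $a\cos(2\pi z)\cdot v_0$ to its argument, a vector lying in $\ker d\ell$; so $\ell\circ j = \ell$. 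Since $d\ell = (2,-(1+\sqrt{5}),0)$ is a left eigenvector of the linear part of $h$ with eigenvalue $\lambda_- = (3-\sqrt{5})/2$, the stable eigenvalue of $A$, one gets $\ell\circ F_{a,b}^{-1} = \lambda_-\,\ell$, and hence $\ell\circ F_{a,b} = \lambda_+\,\ell$ with $\lambda_+ = 1/\lambda_-$. In particular $F_{a,b}$ permutes the level sets of $\ell$, so the affine foliation is $F_{a,b}$-invariant.

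To identify the affine foliation with $W^{cs}$, differentiate to obtain $d\ell\circ DF_{a,b} = \lambda_+\,d\ell$ pointwise on $\mathbb{T}^3$. Fix $v\in E^{cs}(p)$. The dominated splitting $E^{cs}\oplus E^u$ of $F_{a,b}$ (which is $C^1$-close to $F_{0,0}$ and inherits a strict rate gap with $\|DF_{a,b}|_{E^{cs}}\| < \lambda_+$) yields a constant $\mu<\lambda_+$ and $C>0$ with $\|DF_{a,b}^n v\|\le C\mu^n\|v\|$ for all $n\ge 0$. On the other hand, iterating the pointwise identity gives $\ell(DF_{a,b}^n v) = \lambda_+^n\,\ell(v)$, together with $|\ell(DF_{a,b}^n v)|\le \|d\ell\|\cdot\|DF_{a,b}^n v\|\le C\|d\ell\|\,\mu^n\|v\|$. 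Letting $n\to\infty$ forces $\ell(v)=0$, so $E^{cs}(p)\subset\ker d\ell$ at every $p$; a dimension count gives $E^{cs}=\ker d\ell$, and the affine foliation coincides with $W^{cs}$.

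The main obstacle is really the second step (the identification with $W^{cs}$); the invariance computation is essentially algebraic and short, made so by the exact alignment of the perturbations in $j$ with $v_0$ and of those in $h$ with $(0,0,1)$, both lying in $\ker d\ell$. One could alternatively verify $E^{cs}(0) = \ker d\ell$ by an eigenvector computation for $D(jh)_0$ and then invoke uniqueness of the $F_{a,b}$-invariant 2-plane field transverse to $E^u$; the dominated-splitting argument above carries out the identification uniformly across $\mathbb{T}^3$ in one stroke.
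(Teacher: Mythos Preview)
Your proof is correct. The invariance argument is the same observation the paper makes, only recast algebraically: the paper simply notes that $(1+\sqrt{5},2)$ is the unstable eigenvector of $A$, that $h$ is a skew product (so preserves the affine foliation), and that $j$ moves points along $v_0$ (so also preserves it), and declares the lemma ``clear'' from this. Your linear functional $\ell$ encodes exactly these two facts as $\ell\circ h=\lambda_-\ell$ and $\ell\circ j=\ell$.

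Where you genuinely add something is the second step. The paper does not spell out why the invariant affine foliation must coincide with $W^{cs}$; it tacitly relies on the reader to invoke uniqueness of the normally hyperbolic invariant foliation or of the dominated $2$-plane field. Your growth-rate argument, comparing $\lambda_+^n|\,d\ell(v)|$ against the sub-$\lambda_+$ growth of $DF_{a,b}^n$ on $E^{cs}$, gives a clean self-contained justification of this identification, and avoids having to localize an eigenvector computation at a single point. One small remark: the relation $\ell\circ F_{a,b}=\lambda_+\ell$ is really a statement about lifts to $\mathbb{R}^3$, since $\ell$ does not descend to $\mathbb{T}^3$; only its differential $d\ell$ does. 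Your argument uses only $d\ell\circ DF_{a,b}=\lambda_+\,d\ell$, which is the correct torus-level identity, so nothing is lost.
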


\begin{proof}
Observe that $(1+\sqrt{5} , 2)$ is the unstable eigenvector of the torus automorphism given by
$A = \left( \begin{array}{cc} 2 & 1 \\ 1 & 1  \end{array}\right)$.
The lemma is clear from the observations that $h$ is a skew product diffeomorphism and that $j$ leaves the given affine foliation invariant. 
\end{proof}


\begin{Theorem}\label{t:k=1}
In any neighborhood of $(0,0)$ there  is a set $\Phi$ of positive Lebesgue measure so that for $(a,b) \in \Phi$, 
$F_{a,b}$ satisfies properties \ref{i:1}, \ref{i:2} of Theorem~\ref{t:dis1} and furthermore
\begin{enumerate}[label=(\roman*)]
 \item[(iii)]
 the disintegrations of Lebesgue measure along center leaves of $F_{a,b}$ are delta measures
 (in particular, there is an invariant set of full Lebesgue measure in $\mathbb{T}^3$ for $F_{a,b}$ that
  intersects almost every center leaf in a single point).
\end{enumerate}
\end{Theorem}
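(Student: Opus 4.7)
The plan is to combine the Ruelle--Wilkinson theorem with the smooth center stable foliation from Lemma~\ref{l:affine} in order to place $F_{a,b}$ within the scope of the general delta-disintegration theorem for skew-product-like diffeomorphisms on circle bundles announced in the abstract, and then to verify the resulting hypotheses on a positive-measure set of parameters. For small nonzero $(a,b)$ the diffeomorphism $F_{a,b}$ lies in the open set $\mathcal{U}$ of Theorem~\ref{t:dis1}, so it automatically satisfies properties (i), (ii) and admits an atomic disintegration of Lebesgue along center circles with $k=k(a,b)$ equal-mass atoms. The task therefore reduces to producing, in any neighborhood of $(0,0)$, a set $\Phi$ of positive Lebesgue measure on which $k(a,b)=1$.

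Next, I would exhibit $F_{a,b}$ as a skew-product-like map on the smooth $S^{1}$-bundle $\pi\colon\mathbb{T}^{3}\to\mathbb{T}^{2}$, $(x,y,z)\mapsto(x,y)$. The $\pi$-fibres are the $z$-circles, which are the center circles of $F_{0,0}$ and are $C^{1}$-close to the center circles of $F_{a,b}$ by normal hyperbolicity. Because the affine center stable leaves from Lemma~\ref{l:affine} contain the fibre direction $(0,0,1)$, the CS foliation is saturated by fibres and descends to a smooth foliation on $\mathbb{T}^{2}$, namely the stable foliation of the toral automorphism $A$. Thus $F_{a,b}$ genuinely fits a smooth circle-bundle setting with a smooth $F_{a,b}$-invariant CS sub-bundle covering a hyperbolic base map $C^{1}$-close to $A$. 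I would then invoke the general delta-disintegration theorem for conservative three-dimensional skew-product-like diffeomorphisms on circle bundles; its qualitative hypotheses (partial hyperbolicity, ergodicity, smoothness of the CS sub-bundle, negativity of the center Lyapunov exponent) are already secured, so that the conclusion $k=1$ will follow once a final non-resonance condition on the fibre dynamics --- one preventing the persistence of a nontrivial finite invariant section --- is verified.

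The hardest step is to show that this non-resonance condition holds on a positive-measure set of parameters in every neighborhood of $(0,0)$. I expect to proceed by a parameter-transversality argument: differentiate the relevant twist functional (or obstruction to the existence of a continuous invariant $k$-section) with respect to $(a,b)$, use the explicit perturbation terms $a\cos(2\pi z)$ inside $j$ and $b\sin(2\pi y)$ inside $h$ to show that the first-order variation at $(0,0)$ is non-degenerate, and conclude via openness of the non-resonance condition together with a Fubini-type exclusion that $\Phi$ has positive Lebesgue measure arbitrarily close to $(0,0)$. The fact that the two-parameter family has been explicitly tailored to make this transversality computation tractable is precisely the simplification over Theorem~\ref{t:openk=1} alluded to in the remark following Lemma~\ref{l:affine}.
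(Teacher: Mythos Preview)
Your proposal has a genuine gap: you have guessed the wrong hypotheses for the general delta-disintegration theorem, and hence the wrong source of the positive-measure parameter set $\Phi$.

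The general theorem (Theorem~\ref{t:general}) does not ask for a ``non-resonance condition preventing a nontrivial finite invariant section''. Its hypotheses, beyond ergodicity and $\lambda^c<0$, are that the strong unstable foliation is \emph{minimal} and that there is a periodic center leaf on which $F^k$ is Morse--Smale with a single attractor $P$ and repeller $Q$ satisfying an eigenvalue inequality. For the family $F_{a,b}$ the Morse--Smale pair on a fixed center leaf is checked by hand (the fixed points are $(0,0,\tfrac14)$ and $(0,0,\tfrac34)$), and the eigenvalue condition is automatic precisely because of the affine center stable foliation of Lemma~\ref{l:affine} (it forces $\lambda^u(P)=\lambda^u(Q)$). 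So the smooth CS foliation is used here and again later in a key estimate, but not in the way you describe.

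The positive-measure set $\Phi$ is \emph{not} obtained from a transversality argument about obstructions to $k$-sections. It comes from the minimality hypothesis: one finds an explicit period-four center fibre (near $(\tfrac{1}{15},\tfrac{2}{15},\mathbb{T})$) on which the return map of $F_{0,b}$ is a rotation with rotation number depending nontrivially on $b$; standard rotation-number theory then gives a positive-measure set of $(a,b)$ for which the return map has irrational rotation number, which in turn forces every strong unstable leaf through that fibre, and hence every strong unstable leaf, to be dense in $\mathbb{T}^3$. Your proposed first-order variation computation is aimed at the wrong target.

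Finally, even once the hypotheses are in place, the paper does not simply cite Theorem~\ref{t:general}; it runs the argument directly. The engine is a Markov partition lifted from the base, the projection $\pi^s$ along local strong stable leaves giving a factor $F^+$ with invariant measure $\mu^+=\pi^s\mathrm{vol}$, and a natural-extension convergence $F^n\mu^+_{F^{-n}(p)}\to\mu_p$. One then shows, using the Morse--Smale dynamics on $W^c(P)$ together with minimality of $W^u$, that long backward orbits pass through center leaves on which $\mu^+$ assigns mass $>\tfrac12$ to an interval that is eventually contracted; this forces an atom of mass $>\tfrac12$ in $\mu_p$, hence $k=1$. The role of the smooth center stable foliation is to make the ``mass $>\tfrac12$'' estimate (Lemma~\ref{l:onehalfab}) immediate. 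None of this is reflected in your outline.
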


%

It should be noted that disintegrations with $k>1$ points do occur for specific diffeomorphisms in any neighborhood of \eqref{e:A3}.
Namely, if $j$ in \eqref{e:jh} is replaced by 
$(x,y,z) \mapsto (x +  (1 +  \sqrt{5}) a \cos(2\pi q z) ,y + 2a \cos(2\pi q z),z)$ for an integer $q$ with $q \ge 2$,
then $F_{a,b}$ satisfies the $\mathbb{Z}_q$-symmetry relation $F_{a,b} (x,y,z + 1/q) = F_{a,b}(x,y,z) + (0,0,1/q)$. 
By a remark due to Katok and contained in Ruelle and Wilkinson's paper, this forces $k$ to be a multiple of $q$.

The method to prove Theorem~\ref{t:dis1} is sufficiently general to treat some other partially hyperbolic systems.
Let $M$ be a compact three dimensional manifold $M$, for which
there exists a circle bundle (a fiber bundle with circles as fibers) $\pi: M \to \mathbb{T}^2$
over the two dimensional torus. 
Let $A$ be an Anosov diffeomorphism on $\mathbb{T}^2$.
We say that a diffeomorphism $G$ on $M$  is a partially hyperbolic skew product over $A$ if $G$ preserves the fibration of the circle bundle, which is the center foliation, and $G$ projects to $A$. The relevance of this definition is underlined by 
 \cite[Theorem 1]{bonwil05} and \cite[Theorem 1]{bon13}; these results 
provide simple topological conditions that guarantee a topological conjugacy to a partially hyperbolic skew product over
an Anosov diffeomorphism. 
We refer to \citep{MR2068774,bondiavia05,rodrodure11} for background and additional information on partially hyperbolic diffeomorphisms.

 Recall that a foliation on a manifold is minimal if all its leaves lie dense in the manifold.

\begin{Theorem}\label{t:general}
Let $A$ be an Anosov diffeomorphism on $\mathbb{T}^2$.
Let $F$ be a partially hyperbolic diffeomorphism, preserving a smooth measure $m$,
that is topologically conjugate to a partially hyperbolic skew product over $A$. 
Assume the following properties.
\begin{enumerate}[label=(\roman*)]
\item $F$ is ergodic with respect to  $m$;
\item $F$ has  a center Lyapunov exponent $\lambda^c < 0$;
\item $F$ has a minimal strong unstable foliation;
\item \label{i:4} $F$ admits a hyperbolic periodic point $P = F^k(P)$ so that
\begin{enumerate}
\item $F^k$ restricted to the periodic center leaf $W^c(P) = F^k (W^c(P))$ is Morse-Smale with a 
unique attracting fixed point $P$ and unique repelling fixed point $Q$;
\item $\lambda^u (Q) \lambda^c (Q) > \lambda^u (P)$ (where $\lambda^u(Q)$ is the strong unstable  eigenvalue of $DF^k (Q)$,
 $\lambda^c(Q)$ is the central  eigenvalue of $DF^k (Q)$ and likewise at $P$).
 \end{enumerate}
\end{enumerate} 
Then the disintegrations of $m$ along center manifolds are delta measures.
\end{Theorem}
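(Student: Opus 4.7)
The plan proceeds in two stages.

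\textbf{Stage 1: Atomic disintegration.} First I would establish that the conditional measures of $m$ on center leaves are purely atomic, with a common number $k$ of atoms of mass $1/k$ on almost every leaf. This is a direct adaptation of the Ruelle--Wilkinson argument from \cite{MR1838747} and uses only hypotheses (i) and (ii). Because $\lambda^c < 0$ and $m$ is ergodic, Pesin theory produces, on an $m$-full set, local unstable manifolds tangent to the strong unstable bundle; the negative central exponent forces these Pesin unstable manifolds to coincide with pieces of the strong unstable leaves. A disintegration argument comparing forward contraction of center conditionals with expansion in the unstable direction then shows that the conditional measures must charge a zero-dimensional set, and ergodicity of $m$ forces a single well-defined $k$.

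\textbf{Stage 2: Ruling out $k \ge 2$.} Let $\mathcal{A}$ be the $F$-invariant set of atoms, with $m(\mathcal{A}) = 1$. Suppose for contradiction that $k \ge 2$. Since $\mathcal{A} \cap W^c(P)$ is a finite $F^k$-invariant subset of the Morse--Smale circle map $F^k\vert_{W^c(P)}$, hypothesis (iv)(a) forces $\mathcal{A} \cap W^c(P) \subset \{P,Q\}$. Using the minimality of the strong unstable foliation (hypothesis (iii)), I would argue that atoms on center leaves near $W^c(P)$ must accumulate onto the atoms on $W^c(P)$, so that in fact both $P$ and $Q$ lie in $\mathcal{A}$. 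The point here is to exploit that, because $\lambda^c<0$, the strong unstable set of an atom stays in $\mathcal{A}$ up to $m$-null adjustments, so the dense strong unstable leaves pin down the positions of atoms on nearby center leaves.

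\textbf{Stage 3: The quantitative contradiction.} The remaining task is to exploit (iv)(b) to rule out two atoms on $W^c(P)$. Pick, via strong unstable minimality, a strong unstable arc running from a neighborhood of $Q$ into a neighborhood of $P$, and iterate it backward by $F^{-k}$. Along such an orbit, backward iterates compress the center-unstable two-area near $Q$ at rate $\lambda^u(Q)\lambda^c(Q)$, while on the leaf $W^c(P)$ the strong unstable is contracted only at rate $\lambda^u(P)$. The strict inequality $\lambda^u(Q)\lambda^c(Q) > \lambda^u(P)$ then says that the center-shifted image of a small unstable segment near $Q$ is asymptotically thinner in the center direction than the scale on which the two hypothetical atom-sections near $P$ are separated. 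Combined with the $F$-invariance of $\mathcal{A}$ and the equal mass $1/k$, this yields more mass near $P$ than $m$ can supply, a contradiction; hence $k=1$.

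\textbf{Main obstacle.} Stage 1 is essentially classical; the hard part is Stage 3. The novelty needed is a quantitative comparison between the local atomic picture near $P$ and near $Q$, and the inequality in (iv)(b) is precisely what makes this comparison strict. The delicate point is that we are working under a merely topological conjugacy to a skew product, so smooth holonomies and a smooth center-stable foliation (which simplify the argument for Theorem \ref{t:k=1}) are unavailable; the minimal strong unstable foliation must substitute as the mechanism transporting information between $Q$ and $P$.
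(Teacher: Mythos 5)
Your proposal captures two of the right high-level intuitions — that Ruelle--Wilkinson gives atomic disintegrations with some common $k$, and that the inequality $\lambda^u(Q)\lambda^c(Q) > \lambda^u(P)$ together with minimality of the strong unstable foliation is the engine for ruling out $k\ge 2$ — but the route you sketch has both a flaw and a substantive gap.

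The flaw is in Stage 2. The invariant atom set $\mathcal{A}$ is only determined $m$-modulo null sets, and the single periodic leaf $W^c(P)$ has measure zero; so the phrase ``$\mathcal{A}\cap W^c(P)$ is a finite $F^k$-invariant set'' has no well-defined meaning, and ``both $P$ and $Q$ lie in $\mathcal{A}$'' is not a statement one can reach by taking limits along nearby center leaves (the conditional measures need not depend on the leaf in any continuous or even semicontinuous way). Indeed, if the conditional on $W^c(P)$ were $\tfrac12(\delta_P+\delta_Q)$, this would be $F^k$-invariant and perfectly compatible with the Morse--Smale dynamics, so (iv)(a) alone gives no contradiction; the work has to be done with conditional measures on leaves near $W^c(P)$, not on $W^c(P)$ itself.

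The gap is in Stage 3: you describe a ``mass comparison'' but give no mechanism to turn a geometric volume estimate into a statement about the disintegrations of $m$. The paper's proof supplies exactly this glue. It introduces a Markov-type partition $\{S_i\}$ coming from a Markov partition of the base Anosov map, projects along local strong stable leaves to obtain a factor system $F^+$ with invariant measure $\mu^+=\pi^s\,\mathrm{vol}$, and then invokes Crauel's natural-extension theorem to write each disintegration $\mu_p$ as the weak-$*$ limit of the push-forwards $F^n|_{W^c(F^{-n}p)}\mu^+_{F^{-n}p}$. The eigenvalue inequality (iv)(b) is then used, via linearizing coordinates and the estimate $\mathrm{vol}(V_N\setminus W_N)\sim(\lambda^u(Q)\lambda^c(Q))^{-N}$ against $\mathrm{vol}(V_N)\sim(\lambda^u(P))^{-N}$, to show that for most fibers near $P$ the measure $\mu^+_q$ assigns more than half its mass to a thin strip $W_N$ away from $Q$. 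Combined with the Pesin-regular set of Ruelle--Wilkinson (where intervals in center leaves contract uniformly), bounded distortion, Pesin holonomy bounds, and minimality of the strong unstable foliation to ship that strip into the regular set, one obtains that some limit disintegration has an atom of mass $>\tfrac12$, forcing $k=1$. In a setting with only a topological conjugacy and no smooth center-stable foliation, this projection-and-natural-extension machinery, together with the Pesin-theoretic holonomy estimates, is not a convenience but is essential; your proposal has no substitute for it.
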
 

We illustrate Theorem~\ref{t:general} with an example of a partially hyperbolic skew product system from \cite{bonwil05}.  
Start with the map
\[A_\theta (x,y,z) = ( A (x,y) , z + \theta (x,y)  )  \]
on $\mathbb{T}^3$, where 
$A  = \left( \begin{array}{cc}  3 & 2 \\ 1 & 1  \end{array} \right)$ and 
$\theta: \mathbb{T}^2 \to \mathbb{R}$ is a smooth map satisfying 
$\theta (x , y+ \frac{1}{2}) = - \theta (x,y)$.
Consider the action of $\mathbb{Z}_2$ on $\mathbb{T}^3$ induced by 
$\varphi (x,y,z) = (x,y+\frac{1}{2} ,-z)$. The quotient of $\mathbb{T}^3$ by this
$\mathbb{Z}_2$-action is a smooth manifold $M$.
By \cite[Proposition 4.1]{bonwil05}, $A_\theta$ projects to a partially hyperbolic skew product
diffeomorphism $F_\theta: M\to M$. It preserves the smooth measure $m$ that comes from Lebesgue measure on $\mathbb{T}^3$.
The center foliation is a nonorientable circle bundle.

\begin{Proposition}
In any neighborhood of $F_0$ (meaning $F_\theta$ with $\theta (x,y) = 0$), there is a $C^1$ open set $\mathcal{U}$ of diffeomorphisms
on $M$, preserving the smooth measure $m$, so that for each $C^2$ diffeomorphism $F \in \mathcal{U}$,
\begin{enumerate}[label=(\roman*)]
 \item \label{i:pp1} $F$ is ergodic with respect to $m$;
 \item \label{i:pp2} there is an invariant center foliation of $C^2$ circles $W^c (p)$, $p \in M$,
 with  center Lyapunov exponent $\lambda^c \ne 0$;
 \item 
 the disintegrations of $m$ along center leaves are delta measures.
\end{enumerate}
\end{Proposition}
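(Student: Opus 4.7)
The plan is to verify the hypotheses of Theorem~\ref{t:general} on a $C^1$ open set of perturbations of $F_0$ lying in any prescribed $C^1$ neighborhood. Since $F_0$ is itself a partially hyperbolic skew product over $A$, the topological conjugacy to a partially hyperbolic skew product persists on a $C^1$ neighborhood of $F_0$ via the results \cite[Theorem 1]{bonwil05} and \cite[Theorem 1]{bon13} recalled in the introduction, and normal hyperbolicity \cite{MR0501173} supplies an invariant center foliation by $C^2$ circles, verifying half of conclusion \ref{i:pp2}.

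Ergodicity, minimality of the strong unstable foliation, and nonvanishing of the center Lyapunov exponent are produced by a Shub--Wilkinson style perturbation of $F_0$. The construction of \cite{MR1738057} is local near a periodic center circle and can be carried out in a $\mathbb{Z}_2$-equivariant way (equivariant under $\varphi$), hence descends to a volume preserving perturbation on $M$ with $\lambda^c \neq 0$; upon replacing $F$ by $F^{-1}$ we may arrange $\lambda^c < 0$. Ergodicity on a $C^1$ open set then follows from the Burns--Wilkinson stable ergodicity criterion \cite{burwil10}, accessibility being $C^1$ open and dense in this partially hyperbolic setting. Minimality of the strong unstable foliation on $M$ descends for $F_0$ from the minimal unstable foliation of the Anosov $A$ on $\mathbb{T}^2$, and is $C^1$ open under these stable ergodicity conditions.

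The main and hardest step is to satisfy the periodic point condition \ref{i:4} of Theorem~\ref{t:general}. Start from any periodic point $p$ of $A$ of period $k$ in $\mathbb{T}^2$: the corresponding periodic center circle of $F_0$ has trivial return map in the center direction (after passing, if necessary, to $F_0^{2k}$ to undo the $z \mapsto -z$ flip caused by $\varphi$). A small conservative $C^\infty$ perturbation supported in a neighborhood of this circle, acting in the center direction as a Morse--Smale ``north-pole--south-pole'' circle map, produces a hyperbolic periodic orbit with a single sink $P$ and a single source $Q$ on the periodic center leaf. Because the strong unstable eigenvalues $\lambda^u(P)$ and $\lambda^u(Q)$ stay within $O(\text{perturbation size})$ of the common unperturbed unstable eigenvalue of $A^k$, the inequality $\lambda^u(Q)\lambda^c(Q) > \lambda^u(P)$ reduces to $\lambda^c(Q)$ exceeding $1$ by an explicit small amount, which is enforced by choosing the center component of the perturbation sufficiently strongly repelling at $Q$.

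Both the Morse--Smale structure on the periodic center leaf and the eigenvalue inequality are $C^1$ open conditions around a hyperbolic orbit. Consequently a $C^1$ open neighborhood of the perturbed diffeomorphism fits inside the given $C^1$ neighborhood of $F_0$ and satisfies every hypothesis of Theorem~\ref{t:general}; conclusion (iii) of the proposition then follows at once. The principal obstacle to the plan is quantitative: the center repulsion at $Q$ must beat the perturbation of the unstable eigenvalues while the overall $C^1$ size remains small, and localization of the perturbation near the periodic center leaf is what reconciles these two demands.
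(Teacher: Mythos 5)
Your proposal follows the same broad outline as the paper (verify the hypotheses of Theorem~\ref{t:general} on an open set of perturbations of $F_0$), but two of your steps contain genuine gaps where the paper instead relies on the explicit structure of a specific family $A_{a,b} = j\circ h$ of perturbations.

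First, the claim that ``minimality of the strong unstable foliation on $M$ descends for $F_0$ from the minimal unstable foliation of the Anosov $A$ on $\mathbb{T}^2$'' is false. For $F_0$ the fiber dynamics is trivial, so every strong unstable leaf is contained in the image in $M$ of an invariant level set $\mathbb{T}^2\times\{z\}$; leaves are dense in such a two-torus but not in $M$. Minimality has to be \emph{created} by a perturbation: the paper invokes Lemma~\ref{l:PQopen}, which uses blenders \citep{RH^2TU} and the conservative connecting lemma \cite{boncro04} to produce a diffeomorphism with a minimal strong unstable (and strong stable) foliation, and then uses robustness of that property. You cannot get minimality merely from ergodicity/stable ergodicity plus $C^1$-openness; the initial diffeomorphism $F_0$ does not have it.

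Second, your argument for the eigenvalue inequality $\lambda^u(Q)\lambda^c(Q) > \lambda^u(P)$ is not substantiated. You argue that $\lambda^u(P)$ and $\lambda^u(Q)$ stay within $O(\varepsilon)$ of a common value and that the inequality reduces to $\lambda^c(Q)$ exceeding $1$ by a small amount ``enforced by choosing the center component of the perturbation sufficiently strongly repelling at $Q$.'' But $\lambda^c(Q)-1$ is itself $O(\varepsilon)$ for a $C^1$-size-$\varepsilon$ perturbation, and there is no reason a conservative perturbation can be tuned so that the gain in $\lambda^c(Q)$ strictly dominates the possible loss $\lambda^u(P)-\lambda^u(Q)$; you flag this as the ``principal obstacle'' but do not resolve it. The paper resolves it structurally: in the explicit family $A_{a,b}=j\circ h$, both $h$ and $j$ preserve the affine foliation tangent to $\mathrm{span}\{(1+\sqrt{3},1,0),(0,0,1)\}$ (the center unstable foliation), so the quotient dynamics on the transverse one-dimensional direction is the fixed linear contraction/expansion from $A$, forcing $\lambda^u(Q)=\lambda^u(P)$ \emph{exactly} (see Lemma~\ref{l:affine} and the proof of Lemma~\ref{l:onehalf}). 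The inequality then reduces to $\lambda^c(Q)>1$, which is automatic for the source. Your generic ``Shub--Wilkinson style'' perturbation does not come with a smooth invariant $cu$-foliation and therefore does not come with this identity; without it the argument does not close.
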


\begin{proof}[Sketch of proof]
Consider the family  $A_{a,b} = j \circ h$ on $\mathbb{T}^3$ with
\begin{align*}
h(x,y,z) &= (3 x + 2 y  , x + y , z +  b \sin (2 \pi y)),
\\
j(x,y,z) &= (x +  (1 + \sqrt{3}) a \cos (2 \pi z) , y +  a \cos (2 \pi z) , z). 
\end{align*}
Note that we recover $A_\theta$ for $\theta (x,y)= 0$ if $a,b=0$.
A direct calculation shows that $A_{a,b}$ is  volume preserving as well as equivariant with respect to the given $\mathbb{Z}_2$-symmetry,
and hence projects to a diffeomorphism on $M$.
One also checks that for small $a,b \ne 0$, there are precisely two hyperbolic fixed points $(0,0,\frac{1}{4})$ and $(0,0,\frac{3}{4})$.
For small values of $(a,b)$, $A_{a,b}$  possesses a fixed center leaf near $\{(x,y)= (0,0) \}$.
This leaf is normally hyperbolic and therefore contains the fixed points $(0,0,\frac{1}{4})$ and $(0,0,\frac{3}{4})$.
The additional eigenvalue conditions from item~\ref{i:4} in Theorem~\ref{t:general} hold since $A_{a,b}$ has a smooth center unstable foliation,
compare Lemma~\ref{l:affine} and the proof of Lemma~\ref{l:onehalf}.   

Let $F_{a,b}$ denote the projected diffeomorphism on $M$.
By Hirsch, Pugh \& Shub \cite{MR0501173}, or \cite[Proposition 4.1]{bonwil05}, $F_{a,b}$ and small perturbations thereof
are topologically conjugate to a partially hyperbolic skew product over $A$.
By \cite{RH^2U}
the set of ergodic partially hyperbolic diffeomorphisms is $C^1$ open and dense. 
Baraviera and Bonatti \cite{barbon03} show how a nonzero center Lyapunov exponent is created through small 
local perturbations (if needed).
A minimal strong unstable foliation is created through an arbitrarily small perturbation with a blender 
as in Lemma~\ref{l:PQopen} below.
All these properties are robust, so that an open set of diffeomorphisms is created
for which 
 the conditions of Theorem~\ref{t:general} hold (take the inverse diffeomorphism in case of a positive 
center Lyapunov exponent). 
Apply Theorem~\ref{t:general}.
\end{proof}


We finish the introduction with the proof strategy for Theorems~\ref{t:openk=1} and \ref{t:k=1} in keywords.
We make use of a 
Markov-like partition of $\mathbb{T}^3$ coming from a Markov partition on leaf space.
On the partition elements there is a projection obtained 
by identifying points on local strong stable manifolds. Composing the diffeomorphism $F$ with the projection defines a factor $F^+$
of $F$.
There is a one-to-one relation between invariant measures of $F^+$ and of $F$, 
we use this to express the disintegration $\mu_p$ of Lebesgue measure on the center leaf through a point $p$
as a limit of pushforwards of disintegrations of projected Lebesgue measure. Combined with the dynamics of $F$ this allows us to conclude that $\mu_p$ is a delta measure, for Lebesgue almost all points $p$. Ingredients from the dynamics we use are, apart from ergodicity with respect to Lebesgue measure, 
a minimal strong unstable foliation 
and a fixed center leaf with Morse-Smale dynamics containing a pair of an attracting and a repelling fixed point.
These dynamical ingredients and the negative center Lyapunov exponent, 
give the existence of a set of center leaves of positive Lebesgue measure on which a large interval is contracted to small intervals under
 iteration.



The careful comments by anonymous referees were a great help to improve the paper.
I am grateful to the referee who pointed out a missing argument in a previous version. 

\section{Proofs of the results on delta measures as disintegrations}

In order to avoid too much jumping between cases, we
will prove Theorems~\ref{t:openk=1} and \ref{t:k=1} and deal with Theorem~\ref{t:general} by 
noting that, apart from notation, it follows from the same arguments.
Write $W^{i} (p)$, $i= s,c,u$, for the strong stable, center or strong unstable manifold containing
$p$. Further, $W^{sc} (p)$ is the center stable manifold and $W^{cu} (p)$ is the center unstable
manifold containing $p$. We will denote Lebesgue measure on $\mathbb{T}^3$ by $\text{vol}$.
   
Minimal strong stable or strong unstable foliations are abundant in the context
of partially hyperbolic diffeomorphisms \citep{MR1954435}.
The following two lemmas make this precise for members of the family $F_{a,b}$ and for diffeomorphisms close to \eqref{e:A3}.


\begin{Lemma}\label{l:PQ}
In any neighborhood of $(0,0)$ there  is a set $\Phi$ of positive measure so that for $F = F_{a,b}$ with $(a,b) \in \Phi$, 
 \ref{i:1}, \ref{i:2} from Theorem~\ref{t:dis1} hold, and
further the following properties:  
 \begin{enumerate}[label=(\roman*)]
 \item \label{i:open1ab} $F$ admits a hyperbolic fixed point $P = P_{a,b}$ so that
 $F$ restricted to the fixed center leaf $W^c(P) = F (W^c(P))$ is Morse-Smale with a 
 unique attracting fixed point $P$ and unique repelling fixed point $Q = Q_{a,b}$;
 \item \label{i:open2ab} the strong unstable and strong stable foliations of $F$ are minimal. 
\end{enumerate}
\end{Lemma}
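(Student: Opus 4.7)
The approach is to build $\Phi$ as the intersection of a positive-measure set provided by \cite{MR1838747} with an open set where the remaining items hold. Items \ref{i:1}, \ref{i:2} of Theorem~\ref{t:dis1} for $F_{a,b}$ are exactly what Ruelle and Wilkinson \cite{MR1838747} establish: a positive-measure set $\Phi_0$ near $(0,0)$ on which $F_{a,b}$ is ergodic and has a negative center Lyapunov exponent.

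For item \ref{i:open1ab}, I would invoke normal hyperbolicity \cite{MR0501173} to produce, for all small $(a,b)$, a fixed center leaf $\gamma_{a,b}$ that is $C^r$-close to $\{(x,y) = (0,0)\}$ and that coincides with it at $(a,b)=(0,0)$. Parametrizing $\gamma_{a,b}$ by $z$, the induced circle diffeomorphism $f_{a,b}$ is a small perturbation of the identity. Since $h(0,0,z) = (0,0,z)$, only $j^{-1}$ contributes at first order in $a$, and a direct expansion from \eqref{e:jh} gives
\begin{align*}
 f_{a,b}(z) = z + 2 a \cos(2\pi z) + O\bigl(a^2 + |ab|\bigr).
\end{align*}
For $0 < |a|$ sufficiently small, the two zeros of $\cos(2\pi z)$ (at $z=1/4$ and $z=3/4$) persist as hyperbolic fixed points of $f_{a,b}$, one attracting and one repelling, so $f_{a,b}$ is Morse-Smale on $\gamma_{a,b}$. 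Combined with the transverse $A^{-1}$-hyperbolicity (which is an $O(|a|)$-perturbation of the unperturbed one), the attracting fixed point, named $P_{a,b}$, is hyperbolic for $F_{a,b}$ itself, with matching repeller $Q_{a,b}$. This yields an open set $\Phi_1 = \{(a,b) : 0 < |(a,b)| < \varepsilon,\ a\ne 0\}$ for some small $\varepsilon>0$ on which \ref{i:open1ab} holds.

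For item \ref{i:open2ab}, the minimality of the strong unstable and strong stable foliations is the delicate point. I would appeal to the abundance results of Bonatti, D\'iaz and Ures \cite{MR1954435}, which yield a $C^1$-open and $C^r$-dense set of partially hyperbolic diffeomorphisms near $F_{0,0}$ with minimal strong stable and strong unstable foliations. The $\cos(2\pi z)$ term in $j$ provides the required $z$-dependent twisting of the strong directions, ensuring that the density statement intersects the two-parameter family $F_{a,b}$ in a $C^1$-open set $\Phi_2$ of parameters. The desired set is then $\Phi := \Phi_0 \cap \Phi_1 \cap \Phi_2$.

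The main obstacle is the measure-theoretic step: one must check that $\Phi_0 \cap \Phi_1 \cap \Phi_2$ is still of positive measure, which amounts to verifying that $\Phi_0$ intersects the open set $\Phi_1 \cap \Phi_2$ in positive measure. This uses that the "bad" parameters removed by Ruelle and Wilkinson form a small set (in a quantitative density sense at $(0,0)$) unrelated to the conditions defining $\Phi_1, \Phi_2$, so that $\Phi_0$ has positive density in every open set approaching $(0,0)$ and in particular intersects the open, coaxial-line-removed set $\Phi_1 \cap \Phi_2$ in positive measure.
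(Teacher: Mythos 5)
Your treatment of item \ref{i:open1ab} is essentially sound and aligns with the paper, which simply notes that a calculation produces hyperbolic fixed points at $(0,0,\tfrac{1}{4})$ and $(0,0,\tfrac{3}{4})$ for $a\neq 0$; your expansion $f_{a,b}(z)=z+2a\cos(2\pi z)+O(a^2+|ab|)$ is a slightly longer route to the same conclusion (in fact, since $h(0,0,z)=(0,0,z)$ and $j$ fixes $(0,0,z)$ precisely when $\cos(2\pi z)=0$, those two points are exact fixed points, not merely perturbations). The citation for \ref{i:1}, \ref{i:2} should be Shub--Wilkinson \cite{MR1738057} (which gives the $C^1$-open set where ergodicity and a nonzero center exponent hold); Ruelle--Wilkinson \cite{MR1838747} is then used for the atomic disintegration, not these two items, but this is a minor misattribution.

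The real problem is your argument for item \ref{i:open2ab}. You invoke the Bonatti--D\'iaz--Ures abundance result \cite{MR1954435}, which gives a $C^1$-open, $C^r$-dense set of partially hyperbolic diffeomorphisms with minimal strong foliations, and then assert that this set must meet the two-parameter family $\{F_{a,b}\}$ in an open set of parameters because of ``the required $z$-dependent twisting.'' This step does not follow: an open dense subset of an infinite-dimensional space of diffeomorphisms can perfectly well miss a given two-parameter surface, and no amount of ``twisting'' in the definition of $j$ forces the intersection to be nonempty, let alone open or of positive measure. In addition, as the paper itself points out in the proof of Lemma~\ref{l:PQopen}, \cite{MR1954435} is stated for general (not conservative) diffeomorphisms, so one needs the conservative blender machinery \cite{RH^2TU} plus the conservative connecting lemma \cite{boncro04} even to apply that circle of ideas in the volume-preserving setting. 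The paper sidesteps all of this for the family $F_{a,b}$ by an explicit, quantitative argument: it exhibits a normally hyperbolic period-four center fiber over $(\tfrac{1}{15},\tfrac{2}{15})$, computes that for $a=0$ the return map on that fiber is a rigid rotation by $bR_4(\tfrac{1}{15},\tfrac{2}{15})$ with $R_4(\tfrac{1}{15},\tfrac{2}{15})>0$, and concludes that for a positive-measure set of parameters $(a,b)$ the rotation number on the persisting fiber is irrational; density of strong unstable (and stable) manifolds of points on that fiber, together with density of center-unstable leaves, then yields minimality. This is the step you are missing, and it is the crux of the lemma. Your final paragraph acknowledges the measure-theoretic issue but only hand-waves it; without the explicit fiber-and-rotation-number computation, there is no mechanism forcing $\Phi_0\cap\Phi_1\cap\Phi_2$ to have positive measure, because $\Phi_2$ as you have described it has not been shown to be nonempty in the family.
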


\begin{proof}
A calculation shows that $F_{a,b}$, for nonzero $a$, has hyperbolic fixed points $(0,0,\frac{1}{4})$ and $(0,0,\frac{3}{4})$.
%
Consider the skew product system $F_{0,b}$ for small $b$ and $a=0$.
Calculate
\[F^{-4}_{0,b} (x,y,z) =
 ( 34x+21y,21x+13y, z + 33x + 21y + b R_4 (x,y))\]
 with
 \[ R_4 (x,y) = \sin (2\pi y) +  \sin (2 \pi (x+y)) +  \sin (2 \pi (3x+2y)) +  \sin (2 \pi (8x+5y)).\]
Note that  $F_{0,b}$ has a period four fiber $\left( \frac{1}{15}, \frac{2}{15}, \mathbb{T}\right)$:
\[
F^{-4}_{0,b} \left(\frac{1}{15},\frac{2}{15},z\right) = \left( \frac{1}{15},\frac{2}{15}, z + bR_4\left(\frac{1}{15},\frac{2}{15}\right)\right) 
\]
with
 \[ R_4 \left(\frac{1}{15},\frac{2}{15}\right) =  \sin\left(\frac{4}{15}\pi\right) + \sin\left(\frac{6}{15}\pi\right) + \sin\left(\frac{14}{15}\pi\right) + \sin\left(\frac{6}{15}\pi\right).\]
Since $R_4 \left( \frac{1}{15},\frac{2}{15}\right) >0$ (all four terms are positive), we find that for a full measure set of values of $b$, $F^{-4}_{0,b}$ has irrational rotation on the period four fiber. 

Treat $F_{a,b}$ as a small perturbation of the family $F_{0,b}$ with $b \ne 0$.
Since the period four fiber of $F_{0,b}$ is normally hyperbolic,  this family possesses a smooth normally hyperbolic period four center fiber $V^c_{a,b}$ near $(\frac{1}{15}, \frac{2}{15}, \mathbb{T})$ \cite{MR0501173}.
For a positive measure set $\Phi$ of parameter values, the rotation number of $F^4_{a,b}$ on $V^c_{a,b}$  is irrational, 
see e.g. \cite[Section I.6]{MR1239171}.
So the strong unstable manifold of a point in $V^c_{a,b}$ is dense in the center unstable manifold of the point. 
Since center unstable manifolds are dense in $\mathbb{T}^3$, this shows that the strong unstable manifold
of any point in $V^c_{a,b}$, $(a,b) \in \Phi$, is dense in $\mathbb{T}^3$.  Compare also \cite[Theorem~12]{MR2366230}.
The same reasoning applies to strong stable manifolds.   
%

The statement that \ref{i:1}, \ref{i:2} from Theorem~\ref{t:dis1} hold is contained in \cite{MR1738057}.
\end{proof}

\begin{Lemma}\label{l:PQopen}
In any neighborhood of \eqref{e:A3} there is a diffeomorphism $F$ for which \ref{i:1}, \ref{i:2} from Theorem~\ref{t:dis1} hold, and
 further the following properties:  
 \begin{enumerate}[label=(\roman*)]
  \item \label{i:open1} $F$ admits a hyperbolic fixed point $P$ so that
  $F$ restricted to the fixed center leaf $W^c(P) = F (W^c(P))$ is Morse-Smale with a 
  unique attracting fixed point $P$ and unique repelling fixed point $Q$; 
 \item \label{i:open2} the strong unstable and strong stable foliations of $F$ are minimal. 
\end{enumerate}
Moreover, these properties are robust.
\end{Lemma}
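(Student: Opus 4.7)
The plan is to combine Lemma~\ref{l:PQ} with robustness arguments and a small blender perturbation. After the linear change of coordinates mentioned before Lemma~\ref{l:affine}, the diffeomorphism $F_{a,b}$ with $(a,b) \in \Phi$ from Lemma~\ref{l:PQ} can be placed in an arbitrarily small neighborhood of \eqref{e:A3}. Properties \ref{i:1}, \ref{i:2} of Theorem~\ref{t:dis1} already hold for $F_{a,b}$ and are $C^1$ open within the Shub--Wilkinson class $\mathcal{U}$: ergodicity with respect to Lebesgue measure, the existence of a normally hyperbolic center foliation of circles, and the negative center Lyapunov exponent all persist under small $C^1$ volume preserving perturbations.

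For property \ref{i:open1}, recall that $F_{a,b}$ has hyperbolic fixed points $(0,0,\tfrac14)$ and $(0,0,\tfrac34)$ on a fixed normally hyperbolic center circle. Under a small $C^1$ perturbation, the center leaf and both fixed points persist by \cite{MR0501173}, and the restricted dynamics on the perturbed circle is again Morse-Smale with a unique attractor and a unique repeller. Since Morse-Smale circle diffeomorphisms are $C^1$ structurally stable, \ref{i:open1} is automatically robust.

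The main obstacle is robust minimality of the strong unstable and strong stable foliations. In Lemma~\ref{l:PQ} minimality was obtained through an irrational rotation on a normally hyperbolic period-four fiber, a mechanism that is destroyed under generic perturbation. To bypass this, I would apply an arbitrarily small $C^1$ volume preserving perturbation of $F_{a,b}$, supported in a box disjoint from the fixed center leaf through $P$ and $Q$, to produce a strong unstable blender in the sense of Bonatti--D\'\i az. As used in \cite{MR1954435}, the presence of such a blender, together with the transitive action of the Anosov base on $\mathbb{T}^2$ which sends any center unstable disc through the blender region, forces every strong unstable leaf to be robustly dense in $\mathbb{T}^3$. Performing the symmetric construction for $F_{a,b}^{-1}$ (equivalently, inserting a dual blender in a second, disjoint box) yields robust minimality of the strong stable foliation as well. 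Because the perturbations are localized away from the Morse-Smale fixed center leaf, property \ref{i:open1} is untouched, and the open conditions from Theorem~\ref{t:dis1}~\ref{i:1}, \ref{i:2} remain valid. The resulting diffeomorphism lies in any prescribed neighborhood of \eqref{e:A3} and all of \ref{i:open1}, \ref{i:open2} hold on a full $C^1$ open set of volume preserving perturbations, establishing the robustness claim.
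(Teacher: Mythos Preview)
Your proposal is correct and follows essentially the same route as the paper: obtain item~\ref{i:open1} from the family $F_{a,b}$ via Lemma~\ref{l:PQ} and normal hyperbolicity, then perturb to introduce blenders so that minimality of the strong foliations becomes robust, as in \cite{MR1954435}. The one point the paper handles more carefully is that \cite{MR1954435} is written for general (not necessarily conservative) diffeomorphisms; to carry the construction through in the volume preserving category the paper explicitly invokes the conservative blender creation of \cite{RH^2TU} and the conservative connecting lemma of \cite{boncro04}, whereas you simply assert that a small volume preserving perturbation produces the blender --- you should point to these references to close that gap.
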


\begin{proof}
As a consequence of Lemma~\ref{l:PQ}, in and near the family $F_{a,b}$ with $a,b$ small we find examples of diffeomorphisms for which the first item holds. 
For the second item, \citep{MR1954435} discusses 
minimal strong unstable and strong stable foliations in general, not necessarily conservative 
(volume preserving) diffeomorphisms. 
But with the basic tool of blenders available for conservative diffeomorphisms \citep{RH^2TU}, and using a connecting lemma for conservative diffeomorphisms \cite{boncro04},  
their construction can be followed and thus the second item holds. 

For convenience of the reader we spend a few words on clarifying the use of blenders.
%
Start with a diffeomorphism possessing a fixed point $P$ with one dimensional unstable manifold
and a fixed point with two dimensional unstable manifold $Q$, such as \eqref{e:jh}.
A blender associated with $P$ is an open set $V$ near $P$ so that $W^{u}(P)$ intersects each
center stable strip that stretches through $V$ (see the references mentioned above).
In \citep{RH^2TU} it is established that there are arbitrarily small
perturbations of such diffeomorphisms that admit a heterodimensional cycle.
Blenders are found in further arbitrarily small perturbations from here, and 
hence blenders occur arbitrarily close to $F_{a,b}$.

We note that a blender associated with $P$ gives a hyperbolic set, containing a  dense set of periodic points
with one dimensional unstable manifold, close to $P$.
By \cite{boncro04}, 
an arbitrarily small perturbation ensures that
$W^{s}(Q)$ intersects $V$.
Then $W^{cu}(Q) \subset  \overline{W^{u}(P)}$:
high iterates of a small neighborhood $O$ of a point in  $W^{cu}(Q)$ under $F^{-1}$
accumulate onto $W^{s}(Q)$ by the $\lambda$-lemma
and hence contain points accumulating onto $W^{u}(P)$ due to the 
blender associated with $P$.

Since center unstable leaves are dense in $\mathbb{T}^3$ and hence 
$W^{cu}(Q)$ is dense in  $\mathbb{T}^3$, we get that
$W^{u}(P)$ is dense in $\mathbb{T}^3$. 
Since strong unstable manifolds accumulate onto $W^{u}(P)$, all
strong unstable manifolds are dense in $\mathbb{T}^3$, that is, the strong unstable foliation is minimal.
Similarly one obtains a minimal strong stable foliation.
%
\end{proof}


We use a partition of $\mathbb{T}^3$ which is perhaps best 
explained by using 
a topological conjugacy to a skew product system, as in the following result from 
Hirsch, Pugh \& Shub \cite{MR0501173}.

\begin{Proposition}\label{l:goro}
There is a homeomorphism $H$ on $\mathbb{T}^3$ with $H \circ F = G \circ H$,
where $G$ is a skew product diffeomorphism
\begin{align*}
 G(x,y,z) &= (A(x,y) , G_{x,y}(z)),
\end{align*}
for the hyperbolic torus automorphism $A$ and with $z\mapsto G_{x,y}(z)$ a diffeomorphism depending continuously on $(x,y)$.
\end{Proposition}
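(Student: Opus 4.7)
\medskip

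The plan is to build $H$ by combining the Hirsch--Pugh--Shub normal hyperbolicity theorem with the structural stability of the base Anosov map $A$.

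First, since $F$ is a $C^1$-small perturbation of \eqref{e:A3}, which has a normally hyperbolic invariant center foliation by the circles $\{(x,y)=\mathrm{const}\}$, the results of \cite{MR0501173} guarantee that $F$ admits an $F$-invariant center foliation $W^c$ whose leaves are $C^2$ circles $C^0$-close (uniformly, together with their tangent planes) to the horizontal circles. The leaf space $\mathcal{L} := \mathbb{T}^3/W^c$ is therefore a topological manifold, and the quotient map $\pi:\mathbb{T}^3\to\mathcal{L}$ is a continuous circle bundle that is $C^0$-close to the trivial bundle $(x,y,z)\mapsto(x,y)$. Because it is a topological perturbation of a trivial bundle, it is itself trivializable: there is a homeomorphism $\Psi:\mathbb{T}^3\to \mathbb{T}^2\times\mathbb{T}$ sending each center leaf to a fiber $\{p\}\times\mathbb{T}$, which identifies $\mathcal{L}$ with $\mathbb{T}^2$. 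In these new coordinates $F$ becomes a skew homeomorphism of the form $(p,z)\mapsto(\widetilde F(p), \widetilde G_p(z))$ with $\widetilde F:\mathbb{T}^2\to\mathbb{T}^2$ a homeomorphism.

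Second, the induced map $\widetilde F$ on the leaf space is $C^0$-close to $A$, as it is the factor on $\mathbb{T}^2$ of the $C^0$-perturbation $F$. In particular $\widetilde F$ is homotopic to $A$; since $A$ is a hyperbolic toral automorphism, its structural stability (or directly the standard semiconjugacy construction for Anosov factors) produces a homeomorphism $h:\mathbb{T}^2\to\mathbb{T}^2$ with $h\circ\widetilde F= A\circ h$. Setting $H := (h\times \mathrm{id})\circ\Psi$, we obtain a homeomorphism on $\mathbb{T}^3$ taking center leaves to vertical circles and conjugating $F$ to a map of the required skew form
\begin{equation*}
G(x,y,z) = (A(x,y),\, G_{x,y}(z)),
\end{equation*}
with $G_{x,y}$ a homeomorphism of $\mathbb{T}$. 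The fiberwise smoothness and continuous dependence of $G_{x,y}$ on $(x,y)$ come from the $C^2$ regularity of the individual center leaves and the continuous variation of these leaves, combined with the fact that $F$ restricts to a $C^2$ diffeomorphism between leaves.

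The step I expect to be the main obstacle is establishing the global topological trivialization $\Psi$ of the center bundle with enough regularity to guarantee continuity of $(x,y)\mapsto G_{x,y}$; this is the place where one really uses that the foliation is a $C^0$-perturbation of the trivial horizontal foliation rather than merely an abstract circle bundle, and the argument is essentially the classical leaf-conjugacy statement in \cite{MR0501173}. Once the trivialization and the leaf-space conjugacy $h$ are in hand, conjugating by $H=(h\times\mathrm{id})\circ\Psi$ and reading off the formula for $G$ is routine.
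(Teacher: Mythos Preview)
The paper gives no proof of this proposition; it simply attributes it to Hirsch, Pugh \& Shub \cite{MR0501173}. Your proposal is a correct unpacking of that reference: persistence of the normally hyperbolic center foliation, trivialization of the resulting circle bundle, and conjugation of the induced base map to $A$. One remark: the leaf-conjugacy statement in \cite{MR0501173} already yields both of your steps at once, since the leaf conjugacy $h$ it produces not only sends center leaves of $F$ to vertical circles but also intertwines the induced leaf-space maps, so $\widetilde F$ is automatically conjugate to $A$ and your separate appeal to structural stability (which, as you note, would need the Franks-type argument since $\widetilde F$ is a priori only a homeomorphism) can be absorbed into the HPS step.
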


Take a Markov partition $\mathcal{R} = \{ R_1,\ldots,R_n\}$ for the base dynamics $(x,y) \mapsto A(x,y)$.
Recall that a partition element $R_i$ is a rectangle, bounded by 
segments in local stable and local unstable manifolds. One can bound the diameter of the rectangles by any given $d>0$.
Consider the partition of $\mathbb{T}^3$ with partition elements $R_i \times \mathbb{T}$. 
The image under the topological conjugacy $H^{-1}$ is a partition
$\{S_1,\ldots,S_n\}$ of $\mathbb{T}^3$. 
The conjugacy $H^{-1}$ maps boundaries of $R_i \times \mathbb{T}$ into center stable and center unstable manifolds of $F$, so that 
the boundaries of $S_i$ lie in center stable and center unstable manifolds of $F$.
A partition element $S_i$ is therefore diffeomorphic
to a product of a rectangle and a circle.
Note that the boundaries of the partition elements (and their forward and backward orbits) 
are of zero Lebesgue measure.
For $p$ in the interior of $S_i$, 
we write $W^{s}_{loc}(p)$ for the local strong stable manifold containing $p$
with boundary points in the boundary of a partition element $S_i$.
Likewise other local invariant manifolds 
have their boundary
inside the boundary of a partition element $S_i$.

\begin{Proposition}\label{p:pesin}
There are $R>0$ and a set $\Lambda\subset \mathbb{T}^3$ that is  of positive Lebesgue measure, 
so that 
\begin{enumerate}[label=(\roman*)]
\item \label{i:p1} For $p\in \Lambda$, $\Lambda \cap W^c(p)$ contains an interval $B(p) \subset W^c(p)$
with a length uniformly bounded from below by $R$;
\item \label{i:p2} There are $C>0$, $\ell < 1$ so that 
\begin{equation} \label{e:unicon}
\left| F^n(q)-F^n(r) \right| \le C \ell^n
\end{equation} 
for $q,r$ from $B(p)$.
\end{enumerate}
Moreover, there is a set $\Lambda$ with these properties that is an $s$-saturated set:
 \begin{equation*} 
 \Lambda = \cup_{p\in\Lambda} W^s_{loc} (p).
 \end{equation*}
\end{Proposition}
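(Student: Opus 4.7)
The plan is to combine Pesin theory for the negative center Lyapunov exponent with absolute continuity of the strong stable foliation. Let $E^s$, $E^c$, $E^u$ denote the strong stable, center, and strong unstable subbundles of the partially hyperbolic splitting. For small $\epsilon>0$ with $\lambda^c+\epsilon<0$ and each integer $N$, consider the Pesin block
\begin{equation*}
\Lambda_{\epsilon,N}=\left\{p\in\mathbb{T}^3:\bigl\|DF^n(p)|_{E^c(p)}\bigr\|\le e^{(\lambda^c+\epsilon)n}\ \text{for all}\ n\ge N\right\}.
\end{equation*}
By the Oseledets--Birkhoff theorem applied to the Lebesgue-ergodic system, $\mathrm{vol}(\Lambda_{\epsilon,N})\to 1$ as $N\to\infty$, so I would fix $N$ large enough that $\Lambda_{\epsilon,N}$ has positive Lebesgue measure.

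Next, I would invoke Pesin's stable manifold theorem for the direction $E^s\oplus E^c$: the strong stable bundle is uniformly contracted by partial hyperbolicity, and the central bundle is nonuniformly contracted with uniform estimates on the block. This provides, for each $p\in\Lambda_{\epsilon,N}$, a two-dimensional $C^1$ Pesin disk $W^{sc}_{\mathrm{Pes}}(p)\subset W^{sc}(p)$ tangent to $E^s(p)\oplus E^c(p)$ at $p$, of uniform size $R_0>0$, on which forward iterates of $F$ satisfy a uniform exponential contraction with rate $\ell=e^{\lambda^c+\epsilon}<1$. Intersecting $W^{sc}_{\mathrm{Pes}}(p)$ with the center leaf $W^c(p)$ inside $W^{sc}(p)$, and using the uniformly bounded geometry of $W^c$, yields an interval $B(p)\subset W^c(p)$ through $p$ with length uniformly bounded below by some $R>0$. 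The Pesin contraction estimate restricted to pairs in $B(p)$ gives \eqref{e:unicon}, so \ref{i:p1} and \ref{i:p2} are established on $\Lambda_{\epsilon,N}$.

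Finally, to upgrade to $s$-saturation I would replace $\Lambda_{\epsilon,N}$ by
\begin{equation*}
\Lambda=\bigcup_{p\in\Lambda_{\epsilon,N}}W^s_{\mathrm{loc}}(p).
\end{equation*}
This set is $s$-saturated since any two points on a common local strong stable manifold share $W^s_{\mathrm{loc}}$, and it has positive Lebesgue measure by absolute continuity of the strong stable foliation. For $q\in W^s_{\mathrm{loc}}(p)$ with $p\in\Lambda_{\epsilon,N}$, I would define $B(q)\subset W^c(q)$ as the image of $B(p)$ under the local strong stable holonomy inside the common center stable leaf $W^{sc}(p)=W^{sc}(q)$. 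The uniform contraction \eqref{e:unicon} on $B(q)$ then follows from the triangle inequality combined with the exponential contraction along strong stable leaves and the contraction on $B(p)$.

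The main obstacle will be securing a uniform lower bound on $|B(q)|$ across the saturation, since strong stable holonomy between center leaves within a center stable leaf is a priori only H\"older. I would resolve this by observing that on the Pesin block the H\"older constants of the holonomy are uniform, yielding $|B(q)|\ge R'$ for some $R'>0$ independent of $p,q$; shrinking the diameter $d$ of the Markov partition if necessary guarantees that the local strong stable manifolds fit inside single partition elements $S_i$, so that $\Lambda$ is $s$-saturated in the precise sense required.
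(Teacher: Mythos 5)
Your approach is essentially the same as the paper's: obtain a positive-measure Pesin block on which center leaves carry intervals of uniform size that contract exponentially, then enlarge to an $s$-saturated set via strong stable holonomy inside center stable leaves. The paper simply cites Ruelle \& Wilkinson for the existence of such a $\Lambda$ satisfying \ref{i:p1} and \ref{i:p2}, whereas you reconstruct the Pesin-block argument from scratch; that is fine and makes the proof more self-contained.

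The one place where your argument is off is the final paragraph. The worry that stable holonomy between center leaves inside a center stable leaf is ``a priori only H\"older,'' and the proposed remedy that ``on the Pesin block the H\"older constants of the holonomy are uniform,'' both misstate the situation. First, the holonomy in question is a holonomy of the strong stable foliation within a fixed center stable leaf, in a \emph{uniformly} partially hyperbolic system; whatever regularity it has, its constants are uniform over the whole manifold and have nothing to do with Pesin blocks (Pesin theory enters only in producing $\Lambda$ and the intervals $B(p)$, not in controlling this holonomy). Second, and more to the point, with a one-dimensional center bundle the center bunching condition is automatic, and by Pugh--Shub--Wilkinson and Burns--Wilkinson this holonomy $h_{p,q}\colon W^c(p)\to W^c(q)$ is in fact \emph{uniformly $C^1$}, not merely H\"older. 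That is precisely the fact the paper invokes: it gives at once the uniform lower bound on $|B(q)|$ and the transfer of the contraction estimate \eqref{e:unicon} (with a possibly larger constant $C$) to the saturated set. Your H\"older route can be patched to yield a lower bound on $|B(q)|$, but the cleaner and correct justification is the $C^1$ regularity of the stable holonomy coming from center bunching, not uniformity of H\"older constants on a Pesin block.
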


\begin{proof}
The statements on the existence of a set $\Lambda$ of positive Lebesgue measure so that
items~\ref{i:p1}, \ref{i:p2} hold 
can be found in \cite{MR1838747}. 
The bound \eqref{e:unicon} (possibly with a different contant $C$) 
also holds when one replaces $\Lambda$ by its $s$-saturation $\cup_{p\in\Lambda} W^s_{loc} (p)$.
This is true since the stable holonomy map $h_{p,q}: W^c (p) \to W^c(q)$, defined for $q \in W^{sc}_{loc}(p)$ by 
$h_{p,q}(x) = W^s_{loc} (x) \cap W^c(q)$, 
is uniformly $C^1$ \citep{pugshuwil97,burwil10}. This shows that we may take $\Lambda$ to be an $s$-saturated set.
\end{proof}

The following lemma contains a key argument for the proof of Theorems~\ref{t:openk=1}~and~\ref{t:k=1}.
Its proof uses the above proposition and also relies on minimality of the strong unstable foliation.
We denote 
leaf measure (Lebesgue measure) on center leaves by $\lambda$.
Given a center manifold $W^c (p)$,  $F|_{W^c(p)} \lambda$ denotes the push forward measure
$F|_{W^c(p)} \lambda  (A) = \lambda (F^{-1} (A))$ on $F (W^c (p))$.

\begin{Lemma}\label{l:delta2}
Let $F$ satisfy the properties of Lemma~\ref{l:PQopen}, or let $F= F_{a,b}$ with $(a,b) \in \Phi$.
For Lebesgue almost all $p \in \mathbb{T}^3$, 
$ \{ F^n|_{W^c (F^{-n}(p))} \lambda \}$ contains a delta measure in its limit points
in the weak star topology.
\end{Lemma}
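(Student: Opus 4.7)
The plan is to combine ergodicity with the uniform leaf contraction on the set $\Lambda$ from Proposition~\ref{p:pesin}, and then to use the Morse--Smale structure on the fixed center leaf together with minimality of the strong unstable foliation to upgrade the resulting concentration to a single point mass.

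First I would invoke ergodicity: by Lemma~\ref{l:PQopen}\ref{i:open1} (or Lemma~\ref{l:PQ} in the $F_{a,b}$ case), $F$ is ergodic with respect to Lebesgue measure, and $\operatorname{vol}(\Lambda)>0$ by Proposition~\ref{p:pesin}. Applying Birkhoff's theorem to $\mathbf{1}_\Lambda\circ F^{-n}$, for Lebesgue-almost every $p\in\mathbb{T}^3$ the backward orbit $\{F^{-n}(p)\}$ visits $\Lambda$ with positive asymptotic frequency; in particular there is an infinite sequence $n_k\to\infty$ with $q_{n_k}:=F^{-n_k}(p)\in\Lambda$.

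Next, for each such $n_k$ Proposition~\ref{p:pesin} supplies an interval $B(q_{n_k})\subset W^c(q_{n_k})$ of length at least $R$ containing $q_{n_k}$ on which $F^{n_k}$ is uniformly contractive: by \eqref{e:unicon}, the diameter of $F^{n_k}(B(q_{n_k}))$ is at most $C\ell^{n_k}$. Since $F^{n_k}(q_{n_k})=p$, this image lies in a shrinking neighborhood of $p$ in $W^c(p)$. Because center leaves are $C^2$ circles uniformly close to vertical circles, their lengths are uniformly bounded above by some $L$, so the pushforward measures $\mu_k:=F^{n_k}|_{W^c(q_{n_k})}\lambda$ on $W^c(p)$ form a tight family of finite measures. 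Passing via Prokhorov to a weak-$\star$ convergent subsequence $\mu_k\rightharpoonup\mu$, the concentration of $F^{n_k}(B(q_{n_k}))$ near $p$ forces $\mu(\{p\})\ge R$.

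The hard part will be to upgrade this atom of mass at least $R$ at $p$ to a pure Dirac measure, i.e.\ to ensure that the pushforward mass coming from the complement $W^c(q_{n_k})\setminus B(q_{n_k})$ (of total at most $L-R$) also concentrates at $p$ or vanishes along a further subsequence. Here I would use the remaining dynamical ingredients: the $s$-saturation of $\Lambda$, the minimality of the strong unstable foliation (Lemma~\ref{l:PQopen}\ref{i:open2}, or Lemma~\ref{l:PQ}\ref{i:open2ab}), and the Morse--Smale dynamics on the fixed center leaf $W^c(P)$ with its unique attracting fixed point $P$ and unique repelling fixed point $Q$. The strategy is to refine the return times $n_k$ so that under $F^{n_k}$ the entire leaf $W^c(q_{n_k})$, minus a small neighbourhood of the backward orbit of $Q$, is dynamically comparable to the attracting behaviour on $W^c(P)$, forcing the complementary mass also to collapse to the single point $p$. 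This concentration step is the delicate part of the argument; the rest is routine tightness and ergodic averaging.
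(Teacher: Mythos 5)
Your setup is correct as far as it goes, but it does not reach the conclusion, and the outlined repair is not yet an argument. Birkhoff returns to $\Lambda$ together with the contraction of Proposition~\ref{p:pesin} on $B(q_{n_k}) \subset W^c(q_{n_k})$ yield, after passing to a weak-$\star$ limit, only an atom of mass at least $R$; since $R$ is small while the circles $W^c(q_{n_k})$ have length of order $1$, the remaining mass $\approx 1 - R$ has no reason to collapse. You correctly name the ingredients that must close the gap --- Morse--Smale structure on $W^c(P)$, minimality of $W^u$, $s$-saturation --- but the sentence about ``refining the return times $n_k$ so that the entire leaf is dynamically comparable to the attracting behaviour on $W^c(P)$'' is a hope, not a step: a backward return of $F^{-n}(p)$ to $\Lambda$ gives no control whatsoever on how close that point is to the fixed center leaf $W^c(P)$, and arranging that an entire leaf shadows the Morse--Smale dynamics is a very different and much rarer event than landing in $\Lambda$.

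The paper's proof reverses the order of the quantifiers. For each $\varepsilon > 0$ it first constructs a \emph{different} set $\Lambda_N$ of positive Lebesgue measure with the much stronger property that every $r \in \Lambda_N$ carries an interval $V \subset W^c(r)$ with $|V| > 1 - \varepsilon$ and $|F^n(V)| < \varepsilon$ for all $n$ past a fixed $L$. Step~1 uses the Morse--Smale dynamics: for $q_{-N} = F^{-N}(q_0)$ with $q_0$ in a fundamental domain $K_0 \subset W^u_{loc}(P)$, everything in $W^c(q_{-N})$ outside an $\varepsilon$-neighborhood of the shadow of the repeller $Q$ contracts to length $< \varepsilon$ after $N$ iterates. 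Step~2 uses minimality of $W^u(P)$, after shrinking $\Lambda$ so that the transit time $M$ is uniform, to land $F^{N+M}(V)$ inside $W^s_{loc}(B(p))$ for $p \in \Lambda$; from that moment Proposition~\ref{p:pesin} together with $s$-saturation keeps the image small for all further iterates. Step~3 shows $\Lambda_N = \cup_{p \in \Lambda} W^{sc}_{loc}(q_{-N}(p))$ has positive measure via absolute continuity of the strong stable foliation. Only then is ergodicity invoked, once for each $\varepsilon_n \to 0$, to obtain backward returns of almost every $p$ to each $\Lambda_N$ and hence a delta measure among the weak-$\star$ limit points. The $1 - \varepsilon$ contraction, the uniformization of $M$, and the positive-measure argument for $\Lambda_N$ are exactly what your sketch leaves out, and they constitute the substance of the proof.
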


\begin{proof}
For intervals $I$ we write $|I|$ to denote their length, so for intervals $I$
inside center leaves we also write $|I| = \lambda(I)$. 
Fix $\varepsilon >0$.\\

\noindent {\em Step 1}.
Recall from Lemma~\ref{l:PQ} the existence of a center leaf, fixed by $F$, 
containing an attracting fixed point $P$ and a repelling fixed point $Q$.
Note that any closed interval in $W^c (P) \setminus Q$ is contracted under iteration by $F$.
The existence of strong stable and strong unstable foliations near $W^c(P)$ 
shows that a similar contraction occurs on center leaves near $W^c(P)$
as long as iterates remain near $W^c(P)$.
Let $K_0\subset W^{u}_{loc} (P)$ be a fundamental interval with endpoints $k_0, k_1 = F^{-1}(k_0)$. 
Write $K_{n} = F^{-n}(K_0)$. Note that the intervals $K_n$ converge to $P$ as $n \to \infty$. 
Now there is $N \in \mathbb{N}$
so that for $q_{-N} \in W^s_{loc}(K_{N})$, there is $V \subset W^c (q_{-N})$ with both
\[
 |V| > 1-\varepsilon \; \mathrm{ and } \; |F^N (V)| < \varepsilon.
\]
Larger values of $N$ are needed for smaller values of $\varepsilon$.
For use in the following step we note that 
a stronger contraction is obtained (the image $F^N(V)$ can be made smaller) 
when taking $N$ larger.\\

\noindent {\em Step 2}.
The second step in the proof leads to the following statement. 
For any $\varepsilon >0$ there exists a set $\Lambda_N$ 
and an integer $L$, 
so that  for $r \in \Lambda_N$ there is an interval $V \subset W^c(r)$ of length at least $1 - \varepsilon$ so that
for any integer $n \ge L$, 
$f^n (V)$ has length smaller than $\varepsilon$. The set $\Lambda_N$ will be constructed to have positive Lebesgue measure, as shown in the third step.

The following reasoning is illustrated in Figure~\ref{f:expl}.
Let $\Lambda$ be the set of positive Lebesgue measure provided by Proposition~\ref{p:pesin}.
For simplicity we assume $C=1$ in \eqref{e:unicon}.
For $p \in \Lambda$, let $D(p)$ be a closed subinterval of $B(p)$ some distance, say $R/10$, away from the boundary of $B(p)$. 
The strong unstable manifold of $P$ lies dense and in fact iterates of a fundamental interval $K_0$ lie dense in $\mathbb{T}^3$.
We therefore get that for all $p \in \Lambda$, there are a positive integer $M = M(p)$ and a point $q_0 = q_0(p) \in K_0$ with
$F^M(q_0) \in W^s_{loc} (D(p)) \subset W^{sc}_{loc} (p)$.
By replacing $\Lambda$ with a smaller set we get $M$ to be constant.
Namely, write $\tilde{\Lambda}_j \subset \Lambda$ for the set of points $p\in\Lambda$ with $M(p) = j$. At least one of the sets $\tilde{\Lambda}_j$ 
has positive Lebesgue measure. Now replace $\Lambda$ by this $\tilde{\Lambda}_j$ and $M$ will be constant.
Let
\begin{align}\label{e:lambda0}
\Lambda_0 &= \cup_{p\in\Lambda} W^{sc}_{loc} (q_{0}(p))
\end{align}
denote the union of the local center stable manifolds of the points $q_0(p) \in K_0$.

Using the first step, we find $N$ large and $V \subset W^c (q_{-N})$, with $q_{-N} = F^{-N} (q_0)$, 
so that the iterate $F^M$ maps $F^N(V) \subset W^c(q_0)$ into $W^s_{loc}(B(p))$.
By the last sentence of Step 1, we may take an $N$ that works for all $p\in\Lambda$. 
Write $L= N+M$.
Observe that $F^L$ maps  $V \subset W^c(q_{-N})$ into $W^s_{loc}(B(p))$.

Now 
\begin{align*}
\Lambda_N &= \cup_{p\in\Lambda} W^{sc}_{loc} (q_{-N})
\end{align*} 
is the required set.
Note that $\Lambda_N$ is defined as a union of local center stable manifolds.\\
%

\noindent {\em Step 3}.
We prove that $\Lambda_N$ has positive Lebesgue measure.
Its measure equals the measure of
 $F^L ( \cup_{p \in \Lambda} W^{sc}_{loc}(q_{-N}) )$.
For fixed $p$, $F^L ( W^{sc}_{loc}(q_{-N}) )$ is a cylinder inside $W^{sc}_{loc} (F^M (q_0)) = W^{sc}_{loc}(p)$ and hence it
intersects $W^s_{loc}(p)$ in a subinterval. 
Since $\Lambda$ is $s$-saturated, see Proposition~\ref{p:pesin}, it follows that
 $F^L(\Lambda_N) \cap \Lambda$  consists of a subinterval in each local strong stable leaf inside $\Lambda$. 
 Since $L$ is fixed, there exists $c>0$ so that for each $r \in \Lambda_N$, 
 \begin{align}\label{e:>c}
  \left|  F^L (W^s_{loc} (r))    \right| > c.
 \end{align}
 We finish the argument by employing absolute continuity of the strong stable foliation.
 We may write, 
  for a Borel set $A$ contained in a partition element $S_i$ and for a choice of $r \in S_i$,
 \[
 \text{vol}(A) = \int_{W^{cu}_{loc}(r)} \lambda^{s}_p (A \cap W^{s}_{loc} (p)) \, d\nu^{cu} (p),
 \]
 where $\nu^{cu}$ is projected measure of local strong stable manifolds;
 $\nu^{cu} (B) = \text{vol}(\cup_{p\in B} W^s_{loc} (p))$.
By e.g. \cite[Section~8.6]{barpes07},  $\nu^{cu}$ is equivalent to leaf measure on $W^{cu}_{loc}(r)$ and
the conditional measure $\lambda^s_p$ is equivalent to leaf measure on $W^s_{loc} (p)$ 
with density function that is bounded and bounded away from zero.
From this and \eqref{e:>c} we find that
\[
 \text{vol}( F^L (\Lambda_N) \cap \Lambda \cap S_i) = \int_{W^{cu}_{loc}(r)} \lambda^{s}_p ( F^L (\Lambda_N) \cap \Lambda \cap W^{s}_{loc} (p)) \, d\nu^{cu} (p)
 \]
 is positive if $\text{vol}( \Lambda \cap S_i)$ is positive.
Therefore $F^L(\Lambda_N) \cap \Lambda$ and thus $\Lambda_N$ has positive Lebesgue measure.\\

\begin{figure}
\begin{picture}(0,0)%
\includegraphics{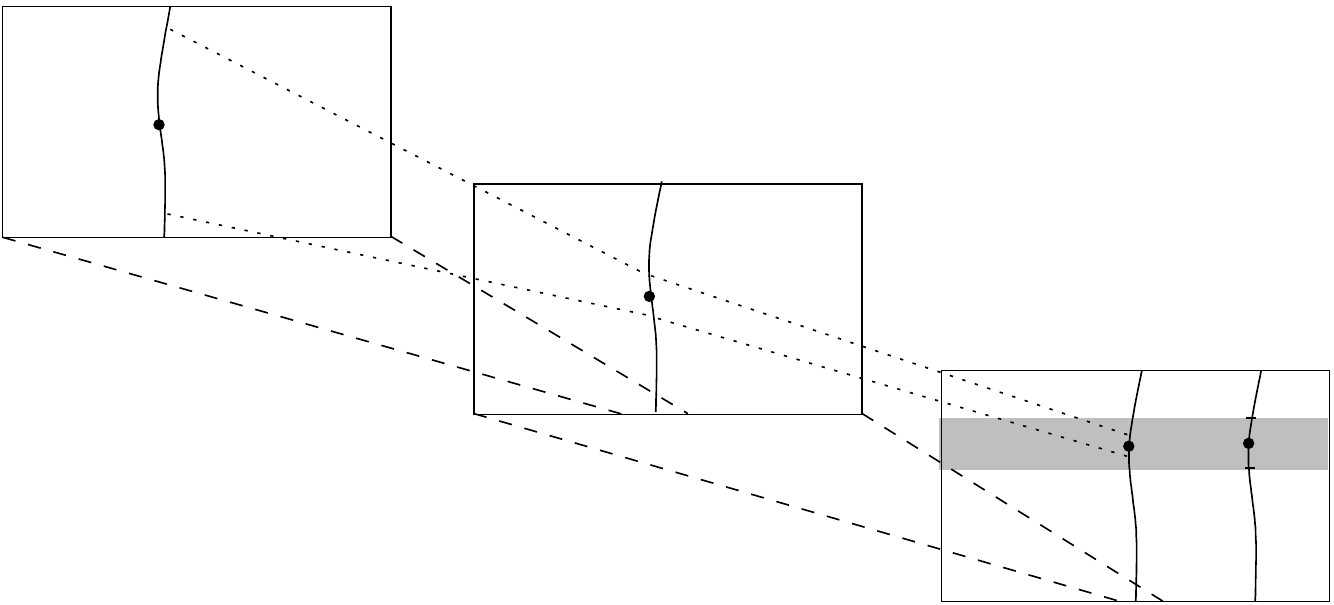}%
\end{picture}%
\setlength{\unitlength}{4144sp}%
\begingroup\makeatletter\ifx\SetFigFont\undefined%
\gdef\SetFigFont#1#2#3#4#5{%
  \reset@font\fontsize{#1}{#2pt}%
  \fontfamily{#3}\fontseries{#4}\fontshape{#5}%
  \selectfont}%
\fi\endgroup%
\begin{picture}(6091,2747)(182,-2728)
\put(5986,-1501){\makebox(0,0)[lb]{\smash{{\SetFigFont{5}{6.0}{\rmdefault}{\mddefault}{\updefault}{\color[rgb]{0,0,0}$W^{sc}_{loc}(p)$}%
}}}}
\put(5374,-2018){\makebox(0,0)[lb]{\smash{{\SetFigFont{5}{6.0}{\rmdefault}{\mddefault}{\updefault}{\color[rgb]{0,0,0}$F^M(q_0)$}%
}}}}
\put(3826,-2041){\makebox(0,0)[lb]{\smash{{\SetFigFont{5}{6.0}{\rmdefault}{\mddefault}{\updefault}{\color[rgb]{0,0,0}$W^{s}_{loc}(B(p))$}%
}}}}
\put(5959,-2013){\makebox(0,0)[lb]{\smash{{\SetFigFont{5}{6.0}{\rmdefault}{\mddefault}{\updefault}{\color[rgb]{0,0,0}$p\in B(p)$}%
}}}}
\put(976,-546){\makebox(0,0)[lb]{\smash{{\SetFigFont{5}{6.0}{\rmdefault}{\mddefault}{\updefault}{\color[rgb]{0,0,0}$q_{-N}$}%
}}}}
\put(773,-263){\makebox(0,0)[lb]{\smash{{\SetFigFont{5}{6.0}{\rmdefault}{\mddefault}{\updefault}{\color[rgb]{0,0,0}$V$}%
}}}}
\put(2027,-86){\makebox(0,0)[lb]{\smash{{\SetFigFont{5}{6.0}{\rmdefault}{\mddefault}{\updefault}{\color[rgb]{0,0,0}$W^{sc}_{loc}(q_{-N})$}%
}}}}
\put(4184,-895){\makebox(0,0)[lb]{\smash{{\SetFigFont{5}{6.0}{\rmdefault}{\mddefault}{\updefault}{\color[rgb]{0,0,0}$W^{sc}_{loc}(q_0)$}%
}}}}
\put(3192,-1346){\makebox(0,0)[lb]{\smash{{\SetFigFont{5}{6.0}{\rmdefault}{\mddefault}{\updefault}{\color[rgb]{0,0,0}$q_0$}%
}}}}
\end{picture}%
\caption[]{An illustration of the proof of Lemma~\ref{l:delta2}: for $\varepsilon>0$ and a given strip $W^s_{loc} (B(p))$, we find a uniformly bounded $L$ so that $F^L$ 
maps a large interval $V \subset W^c (q_{-N})$ of length at least $1-\varepsilon$,
for a $q_{-N} \in K_{N}$, into   $W^s_{loc} (B(p))$.
This uses minimality of the strong unstable foliation. \label{f:expl}}
\end{figure}

We now conclude the proof of the lemma as follows. Take a sequence $\varepsilon_n \to 0$ as $n \to \infty$. 
For each $\varepsilon = \varepsilon_n$, 
one constructs a set $\Lambda_N$.
By ergodicity, there is a set $S_n \subset \mathbb{T}^3$ with $\text{vol} (S_n) = 1$ so that  
$F^{-n} (p)$ intersects the constructed $\Lambda_N$ infinitely often for $p \in S_n$.
The lemma follows for $p \in \cap_n S_n$, noting that $\text{vol} (\cap_n S_n) =1$.
\end{proof}


\begin{Proposition}\label{p:convab}
Let $F$ satisfy the properties of Lemma~\ref{l:PQopen}, or let $F = F_{a,b}$ with $(a,b) \in \Phi$.
Then the disintegrations of Lebesgue measure along center leaves of $F$ are delta measures.
\end{Proposition}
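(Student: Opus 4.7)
The strategy is to combine Lemma~\ref{l:delta2} with an identification of the disintegration $\mu_p$ as the weak-$*$ limit of normalized leaf-measure pushforwards. Concretely, I would aim to show that for Lebesgue almost every $p$,
\[\tilde\nu^n_p := \frac{1}{|W^c(F^{-n}(p))|}\, F^n|_{W^c(F^{-n}(p))}\lambda\]
converges in the weak-$*$ topology to $\mu_p$. Granted this, Lemma~\ref{l:delta2} furnishes a delta measure as a subsequential limit, so $\mu_p$ is forced to equal that delta.

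To set up the convergence, I would pass through the factor dynamics $F^+$ sketched in the introduction. Using Proposition~\ref{l:goro}, pull back a Markov partition of $A$ via the conjugacy $H$ to obtain the Markov-like partition $\{S_1,\ldots,S_n\}$ of $\mathbb{T}^3$ whose boundaries lie in center stable and center unstable manifolds. On each $S_i$, define the projection $\pi$ collapsing local strong stable leaves to points, producing a quotient $\Sigma = \bigsqcup \Sigma_i$ carrying the factor map $F^+ = \pi \circ F$. Absolute continuity of the strong stable foliation guarantees that $\text{vol}^+ := \pi_* \text{vol}$ is a Borel probability measure on $\Sigma$, preserved by $F^+$, and the correspondence between $F$-invariant and $F^+$-invariant measures is bijective. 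Since $\pi$ restricts to a homeomorphism from any local center leaf onto its image, the center foliation descends to a continuous foliation $\bar W^c$ on $\Sigma$, and stable-holonomy invariance of the disintegration identifies the Rokhlin disintegration $\bar\mu_{\pi(p)}$ of $\text{vol}^+$ along $\bar W^c$ with the pushforward $\pi_*\mu_p$.

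Working in the quotient, the claim $\tilde\nu^n_p \to \mu_p$ reduces to $\pi_*\tilde\nu^n_p \to \bar\mu_{\pi(p)}$. In $\Sigma$ the only direction transverse to center leaves is the strong unstable one, and $F^+$ contracts center leaves at rate $\lambda^c<0$ while expanding in the unstable direction. Using $F^+$-invariance of $\text{vol}^+$ together with absolute continuity of the unstable disintegration, I would realize $\bar\mu_{\pi(p)}$ as the asymptotic distribution under $(F^+)^n$ of the normalized leaf measures on $\bar W^c(F^{-n}(p))$: a Birkhoff-type ergodic argument along the transverse unstable direction identifies this limit with the Rokhlin disintegration. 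Lifting the convergence back through $\pi$ yields $\tilde\nu^n_p \to \mu_p$ in the weak-$*$ topology for Lebesgue almost every $p$, and combining with Lemma~\ref{l:delta2} completes the proof.

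The main obstacle is the identification step, i.e.\ showing that $\bar\mu_{\pi(p)}$ is actually the weak-$*$ limit of the pushforwards and not merely an invariant measure accompanying them. The factor $F^+$ is only continuous, not smooth, so one cannot invoke standard SRB theory for $F^+$ directly; instead the argument must exploit the bijective correspondence between $F$- and $F^+$-invariant measures together with $F$-invariance of $\text{vol}$ to promote the limit-point information to genuine convergence. Care is also required for center intervals straddling several partition elements, which calls for a Markov refinement argument to guarantee that leaf-measure pushforwards remain well-behaved across $\partial S_i$.
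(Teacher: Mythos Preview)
There is a genuine gap at exactly the point you flag as ``the main obstacle''. The convergence
\[
\tilde\nu^n_p \;=\; \frac{1}{|W^c(F^{-n}(p))|}\, F^n|_{W^c(F^{-n}(p))}\lambda \;\longrightarrow\; \mu_p
\]
is not something a Birkhoff-type argument will give you: Birkhoff produces time averages, not the limit of this particular sequence, and the bijection between $F$- and $F^+$-invariant measures says nothing about limits of non-invariant sequences. The convergence that \emph{does} follow from the natural-extension machinery (Crauel, Arnold) is
\[
F^{n}|_{W^c (F^{-n}(p))}\, \mu^+_{F^{-n}(p)} \;\longrightarrow\; \mu_{p},
\]
where $\mu^+_q$ is the disintegration along center leaves of $\mu^+ = \pi^s_*\mathrm{vol}$, not Lebesgue measure $\lambda$ on the leaf. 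There is no a priori reason for $\mu^+_q$ to equal, or even be comparable to, $\lambda$; your identification $\bar\mu_{\pi(p)} = \pi_*\mu_p$ via ``stable-holonomy invariance of the disintegration'' is not justified either, since holonomy invariance of $\mu_p$ is part of what one is trying to prove.

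The paper therefore does \emph{not} attempt to show $\tilde\nu^n_p \to \mu_p$. It uses the genuine convergence for $\mu^+_q$ and then re-runs the geometric argument of Lemma~\ref{l:delta2} with $\mu^+_q$ in place of $\lambda$. The substantive new step is to show that $\mu^+_q$ puts mass strictly greater than $\tfrac12$ on the ``good'' interval $W^c(q)\cap W_N$ that gets funneled into $W^s_{loc}(B(p))$; combined with the Ruelle--Wilkinson $k$-atom structure (each atom of mass $1/k$), this forces $k=1$. For $F_{a,b}$ this $>\tfrac12$ estimate comes cheaply from the smooth affine center-stable foliation (Lemma~\ref{l:affine}). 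In the general case it requires the eigenvalue inequality $\lambda^u(Q)\lambda^c(Q) > \lambda^u(P)$, which makes $\mathrm{vol}(V_N\setminus W_N)/\mathrm{vol}(V_N)\to 0$ and hence forces $\lambda^{sc}_q$, and thus $\mu^+_q$, to give the bad strip small mass for most $q\in K_N$. This quantitative ``more than half the $\mu^+$-mass is captured'' argument is the heart of the proof and is entirely missing from your outline; it cannot be replaced by the convergence of Lebesgue pushforwards you were hoping for.
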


\begin{proof}
Recall the partition $\{ S_1,\ldots, S_n\}$ of $\mathbb{T}^3$ 
and consider $F$ acting on the union $S = \cup_i S_i$ of partition elements.
Note that $F$ acting on $\mathbb{T}^3$ is obtained by gluing partition elements along boundaries.

The proposition is proved by applying \cite[Proposition~3.1]{cra90} (see also \cite[Theorem~1.7.2]{arn98})
that treats relations between invariant measures for endomorphisms and their natural extensions.
These results are formulated for skew product diffeomorphisms and 
translate to our setting by Proposition~\ref{l:goro}.

For a point $p$ from a partition element $S_i$,
write $\pi^{s} (p)$ for its projection along the leaf $W^{s}_{loc} (p)$ 
onto a center unstable side, which we denote by $T_i$, of $S_i$. 
Write $F^+$ for the dynamical system on $T = \cup_i T_i$, obtained by composing $F$ 
with $\pi^{s}$.  Write $\mu^+ = \pi^s \text{vol}$ (i.e. $\mu^+ (A) = \text{vol}((\pi^s)^{-1}(A)$).

\begin{Lemma}
The measure $\mu^+$ 
  is $F^+$-invariant.
\end{Lemma}

\begin{proof}
By the Markov property of the partition we have
$F^{-1} (\pi^{s})^{-1} (A) = (\pi^{s})^{-1} (F^+)^{-1} (A)$,
for Borel sets $A \subset T$.
Hence
\begin{align*}
\mu^+ (A) &= \text{vol}\left( (\pi^{s})^{-1} (A) \right) = \text{vol}\left( F^{-1} (\pi^{s})^{-1} (A) \right) = 
\text{vol}\left(  (\pi^{s})^{-1} (F^+)^{-1} (A) \right) \\ &= 
\mu^+ \left( (F^+)^{-1} (A) \right),
\end{align*}
which expresses $F^+$ invariance of $\mu^+$.
\end{proof}

We have the following properties, implying that $F$ is the natural extension of $F^+$, 
see \cite[Appendix~A]{arn98}:
\begin{enumerate}[label=(\roman*)]
 \item $F^+$ is a factor  of $F$;
 \item With $\mathcal{F}$ the Borel $\sigma$-algebra on $S$, 
$\mathcal{F}^+$ the Borel $\sigma$-algebra on $T$, 
and  $\mathcal{G} = (\pi^{s})^{-1} (\mathcal{F}^+)$, 
we have $\sigma(F^n (\mathcal{G}), n\in \mathbb{N}) =  \mathcal{F} \mod 0$.
Here $\sigma(F^n (\mathcal{G}), n\in \mathbb{N})$
is 
the $\sigma$-algebra generated by $F^n(\mathcal{G})$.
\end{enumerate}
In this context we obtain the following convergence.
Let $\mu_p$ denote the disintegrations of Lebesgue measure along center leaves $W^c (p)$.
%
So, if $\nu^c$ is the measure $\nu^c (A) = \text{vol} (\cup_{p\in A} W^c(p))$ on the leaf space $\mathbb{T}^2$,
we have
\[
\text{vol}(A) = \int \mu_p (A \cap W^c (p)) \, d\nu^c (p)
\]
for Borel sets $A \subset \mathbb{T}^3$.
Considering $\mu^+$ as a measure on $S$ with $\sigma$-algebra $\mathcal{G}$, we also get
disintegrations $\mu^+_p$ on $W^c (p)$ satisfying $\mu^+_p (B) = \mu^+_{\pi^s (p)} (\pi^s (B))$ for Borel sets
$B \subset W^c(p)$.

\begin{Lemma}\label{l:convergence}
For Lebesgue almost all $p\in \mathbb{T}^3 $,
\begin{align*}
 F^{n}|_{W^c (F^{-n}(p))} \mu^+_{F^{-n}(p)} &\to \mu_{p}
\end{align*}
as $n\to \infty$, with convergence in the weak star topology. Moreover, for Lebesgue almost all $p \in \mathbb{T}^3$, $\mu_p$ is a union of $k$ point measures of mass $\frac{1}{k}$ each.
\end{Lemma}

\begin{proof}
Under the homeomorphism $H$ 
that provides the topological conjugacy $H \circ F = G \circ H$ from Proposition~\ref{l:goro},
Lebesgue measure $\text{vol}$ on $\mathbb{T}^3$ is pushed forward to the measure $H \text{vol}$ 
with a marginal $\Omega$ on $\mathbb{T}^2$.
Let $G^+: H(T) \to H(T)$ be given by $G^+ =  H \circ F^+ \circ H^{-1}$. 
Note that $\nu^+  = H \pi^{s} \text{vol}$ is an invariant measure for $G^+$.
Interpret $\nu^+$ as a measure on $H(S) = \cup_i R_i$ with $\sigma$-algebra $H ( \mathcal{G})$.
Now \cite[Proposition~3.1]{cra90} provides convergence of measures 
\begin{align*}
G^n_{A^{-n} (x,y)} \nu^+_{A^{-n} (x,y)} &\to \nu_{x,y},
\end{align*}
in the weak star topology, for $\Omega$-almost all $x,y$.

If $C \subset \mathbb{T}^2$ is a set of full $\Omega$ measure, then $H \text{vol} (C \times \mathbb{T}) = 1$, 
that is, $\text{vol} (H^{-1} (C \times \mathbb{T})) = 1$.
We hence obtain the following statement.
Take Lebesgue measure $\text{vol}$ and 
consider the corresponding invariant
measure $\mu^+ = \pi^{s} \text{vol}$ for $F^+$. 
While $\text{vol}$ is ergodic, by \citep{cra90} also $\mu^+$ is ergodic. 
One finds convergence
\begin{align*}
 F^{n}|_{W^c (F^{-n}(p))} \mu^+_{F^{-n}(p)} &\to \mu_{p},
\end{align*}
for Lebesgue almost all $p \in \mathbb{T}^3$, 
with convergence in the weak star topology. The measures $\mu_p$ are disintegrations of an invariant measure $\mu$ and 
by \cite[Theorem~1.7.2]{arn98} the measures $\mu^+$ and $\mu$ are in one-to-one correspondence so that $\mu$ equals Lebesgue measure.
By \cite{MR1838747}, 
$F^{n}|_{W^c (F^{-n}(p))} \mu^+_{F^{-n}(p)}$ converges to $k$ point measures of mass $\frac{1}{k}$ each,
for Lebesgue almost all $p \in \mathbb{T}^3$.
\end{proof}

We wish to mimic the argument in the proof of Lemma~\ref{l:delta2}, with Lebesgue measure on
center leaves $W^c (q)$ replaced by $\mu^+_q$. 
Let $S_1$ be the partition element of the Markov partition containing the fixed center leaf with the attracting fixed point 
$P$ and the repelling fixed point $Q$. 
For
the fundamental  interval $K_0 \subset W^{u}_{loc}(P)$ introduced in the proof
of Lemma~\ref{l:delta2}, consider the region
\[ 
 V_0 = W^{sc}_{loc} (K_0).
\]
For $n \ge 1$, write  $V_n = F^{-1} (V_{n-1}) \cap S_1$.
Denote by $B_d  (q) \subset W^c(q)$  the interval of diameter $d$ around $q$ inside
$W^c(q)$.
Consider the union of segments $B_d (q)$ over $q \in K_0$ 
 and let $W_0$ be the local strong stable manifolds of this union;
\[
 W_0 = W^s_{loc} \left(  \cup_{q \in K_0}  B_d (q) \right),
\]
see Figure~\ref{f:s1}.
Recall from the proof of Lemma~\ref{l:delta2} the regular set $\Lambda$ of positive Lebesgue measure
and the integers $N,M$.
By taking $d$ depending on $\varepsilon$ small enough, we get 
\begin{align*} 
 W_0 \cap W^{sc}_{loc} (q) &\subset F^{-M}  (W^s_{loc}   ( B(p)))
\end{align*}
for $F^M(q) \in W^s_{loc}(D( p))$ and $p \in \Lambda$.
Write  $W_N = F^{-1} (W_{N-1}) \cap S_1$ for the images under 
$F^{-1}$ inside the partition element $S_1$.
Observe that for large $N$, $V_N \setminus W_N$ is a box inside $V_N$, very thin in the center direction. See again Figure~\ref{f:s1}.

\begin{figure}
\begin{picture}(0,0)%
\includegraphics{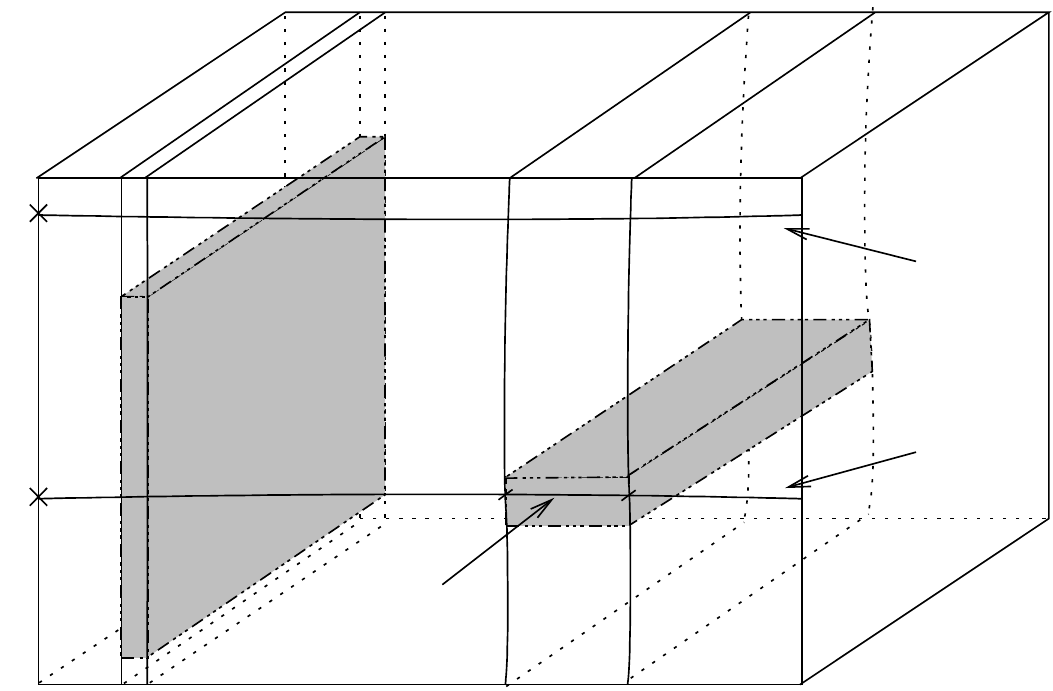}%
\end{picture}%
\setlength{\unitlength}{4144sp}%
\begingroup\makeatletter\ifx\SetFigFont\undefined%
\gdef\SetFigFont#1#2#3#4#5{%
  \reset@font\fontsize{#1}{#2pt}%
  \fontfamily{#3}\fontseries{#4}\fontshape{#5}%
  \selectfont}%
\fi\endgroup%
\begin{picture}(4807,3153)(257,-1582)
\put(2740,-949){\makebox(0,0)[lb]{\smash{{\SetFigFont{6}{7.2}{\rmdefault}{\mddefault}{\updefault}{\color[rgb]{0,0,0}$W_0$}%
}}}}
\put(619,-992){\makebox(0,0)[lb]{\smash{{\SetFigFont{6}{7.2}{\rmdefault}{\mddefault}{\updefault}{\color[rgb]{0,0,0}$W_N$}%
}}}}
\put(2093,-1212){\makebox(0,0)[lb]{\smash{{\SetFigFont{6}{7.2}{\rmdefault}{\mddefault}{\updefault}{\color[rgb]{0,0,0}$K_0$}%
}}}}
\put(4498,323){\makebox(0,0)[lb]{\smash{{\SetFigFont{6}{7.2}{\rmdefault}{\mddefault}{\updefault}{\color[rgb]{0,0,0}$W^u_{loc}(Q)$}%
}}}}
\put(4498,-514){\makebox(0,0)[lb]{\smash{{\SetFigFont{6}{7.2}{\rmdefault}{\mddefault}{\updefault}{\color[rgb]{0,0,0}$W^u_{loc}(P)$}%
}}}}
\put(636,441){\makebox(0,0)[lb]{\smash{{\SetFigFont{6}{7.2}{\rmdefault}{\mddefault}{\updefault}{\color[rgb]{0,0,0}$V_N$}%
}}}}
\put(272,569){\makebox(0,0)[lb]{\smash{{\SetFigFont{6}{7.2}{\rmdefault}{\mddefault}{\updefault}{\color[rgb]{0,0,0}$Q$}%
}}}}
\put(288,-729){\makebox(0,0)[lb]{\smash{{\SetFigFont{6}{7.2}{\rmdefault}{\mddefault}{\updefault}{\color[rgb]{0,0,0}$P$}%
}}}}
\put(2408,140){\makebox(0,0)[lb]{\smash{{\SetFigFont{6}{7.2}{\rmdefault}{\mddefault}{\updefault}{\color[rgb]{0,0,0}$V_0$}%
}}}}
\end{picture}%
\caption[]{This figure illustrates the regions $W_0\subset V_0$ and $W_N \subset V_N$ inside the partition element $S_1$. The vertical direction is the fiber direction: top and bottom sides are identified.
$T_1$ is the front side of $S_1$.
 Note $F^N(V_N) \subset V_0$ and $F^N(W_N) \subset W_0$.\label{f:s1}}
\end{figure}

The following lemma is specific to the family  $F_{a,b}$. 

\begin{Lemma}\label{l:onehalfab}
Let $F = F_{a,b}$ with $(a,b) \in \Phi$.
For $N$ large enough depending on $\varepsilon$, we get for $q \in K_{N}$, 
$\mu^+_q ( W^c(q) \cap W_N   ) >  \frac{1}{2}$.
\end{Lemma}

\begin{proof}
This follows from smoothness of the center stable foliation as stated in Lemma~\ref{l:affine}.
Indeed, 
With $\lambda$ denoting Lebesgue measure on $W^{sc}_{loc}(q)$,
$\lambda (W^{sc}_{loc}(q)\cap (V_N \setminus W_N))$ is uniformly small if $N$ is large. 
Therefore also the projected measure $\mu^+_q ( W^c(q) \cap (V_N \setminus W_N)   )$ is uniformly small if $N$ is large.
\end{proof}


For $F = F_{a,b}$ we can conclude the proof of Theorem~\ref{t:k=1} by the following reasoning.
As a consequence of Lemma~\ref{l:onehalfab},
when replacing
Lebesgue measure with
$\mu^+_q$ on center leaves $W^c(q)$
in the reasoning 
of Lemma~\ref{l:delta2}, we find that for Lebesgue almost all $p \in \mathbb{T}^3$, the limit points of
$F^{n}|_{W^c (F^{-n}(p))} \mu^+_{F^{-n}(p)}$
 contain point measures of mass more than $\frac{1}{2}$.
By Lemma~\ref{l:convergence}, $F^{n}|_{W^c (F^{-n}(p))} \mu^+_{F^{-n}(p)}$ converges to $k$ point measures of mass $\frac{1}{k}$ each. 
So $k\ge 2$ is not possible and $F^{n}|_{W^c (F^{-n}(p))} \mu^+_{F^{-n}(p)}$ converges to a delta measure for Lebesgue almost all $p\in \mathbb{T}^3$.
This proves  Proposition~\ref{p:convab} for $F = F_{a,b}$ and thus 
 Theorem~\ref{t:k=1}.

Smoothness of the center stable foliation as expressed by Lemma~\ref{l:affine} does not hold in general 
and it is not clear whether Lemma~\ref{l:onehalfab} applies in general.
We remark that 
the center stable foliation is absolutely continuous by \cite{viayan10}.

Let $\nu^{sc}$ be the projected measure of local center stable manifolds on $W^u_{loc}(P)$;
$\nu^{sc} (J) = \text{vol} (\cup_{q\in J} W^{sc}_{loc}(q))$.
For a set $A\subset S_1$ we have 
\begin{align*} 
 \text{vol} (A) &= \int  \lambda^{sc}_q (A \cap W^{sc}_{loc}(q)) \, d \nu^{sc} (q).
\end{align*}

Lemma~\ref{l:onehalfab} is replaced by the following. 
The proof of the lemma relies on eigenvalue conditions at the equilibria $P$ and $Q$
that hold for $F_{a,b}$ and perturbations thereof as well as
for the diffeomorphisms considered in \cite[Section~7.3.1]{bondiavia05}.

\begin{Lemma}\label{l:onehalf}
Let $F$ be as in Lemma~\ref{l:PQopen}.
For any $\eta>0$, there is $N>0$ so that there is a set $J \subset K_{N}$ with
$\nu^{sc}(J) > (1-\eta) \nu^{sc}(K_{N})$ and
$\mu^+_q ( W^c(q) \cap W_N   ) >  \frac{1}{2}$ for $q \in J$.
\end{Lemma}

\begin{proof}
Write $\lambda^{s} (P)< \lambda^{c} (P) <\lambda^{u}(P)$ for the eigenvalues of $DF(P)$. Write likewise 
$\lambda^{s} (Q)< \lambda^{c} (Q)< \lambda^{u}(Q)$ for the eigenvalues of $DF(Q)$.
For the system $(j\circ h)^{-1}$ with $j$ and $h$ as in \eqref{e:jh}
we have, because of the affine center stable foliation, 
$\lambda^u(Q) = \lambda^{u} (P)$. The same applies to the diffeomorphisms considered in \cite[Section~7.3.1]{bondiavia05}.
As $ \lambda^c(Q)>1$ we get 
\begin{align}\label{e:bigger}
\lambda^{u}(Q) \lambda^c(Q)  &> \lambda^u(P).
\end{align}
We consider diffeomorphisms close to $(j\circ h)^{-1}$ so that this inequality holds.

We claim that, thanks to \eqref{e:bigger}, 
\begin{align}\label{WNsmall}
\lim_{N\to\infty} \text{vol}(V_N \setminus W_N) / \text{vol}(V_N) &= 0.
\end{align}
For the computations we use local linearizing coordinates near $P$ and $Q$.
As $F$ is a $C^2$ diffeomorphism, there are local $C^1$ diffeomorphisms defined on neighborhoods $O_P$ of $P$ and $O_Q$ of $Q$ in $\mathbb{T}^3$, that transform $F$ into its linearization at $P$ and $Q$ 
\cite{bel73}. The required nonresonance conditions 
$\lambda^c (Q) \ne \lambda^s (Q)   \lambda^u (Q)$ and $\lambda^c (P) \ne \lambda^s (P)   \lambda^u (P)$ 
to apply \cite{bel73} hold since the 
diffeomorphism is conservative and the products
$\lambda^s (Q) \lambda^c (Q) \lambda^u (Q)$ and $\lambda^s (P) \lambda^c (P) \lambda^u (P)$ are therefore equal to $1$.
 By iteration under $F$ we can extend the neighborhoods with linearizing coordinates and we may therefore assume 
$W^c(P) \subset O_P \cup O_Q$. 
There is no loss of generality in assuming that $S_1 \subset O_P \cup O_Q$ and $V_0 \setminus W_0 \subset O_Q$. 

In linearizing coordinates in $O_Q$,
distances in the strong unstable direction get contracted by a factor $1/\lambda^u (Q)$ each iterate under iteration by $F^{-1}$.
This applies to points starting in $V_0\setminus W_0$ that remain in $S_1$ under iteration by $F^{-1}$.
Points in $V_N \setminus W_N$ moreover satisfy an estimate 
$|x_c| \le  C / (\lambda^c(Q))^N $ for some $C>0$. 
It easily follows from these computations that  $\text{vol} (V_N \setminus W_{N}) \sim (\lambda^{u}(Q) \lambda^c(Q))^{-N}$: for some $C>1$,  
 \[ \frac{1}{C} (\lambda^{u}(Q) \lambda^c(Q))^{-N} \le \text{vol} (V_N \setminus W_N) \le C (\lambda^{u}(Q) \lambda^c(Q))^{-N}.\]
Likewise one obtains 
$\text{vol} (O_P \cap V_N) \sim (\lambda^{u}(P))^{-N}$.
By \eqref{e:bigger} we find that 
for large $N$, the volume of $V_N \setminus W_N$  is much smaller than the volume of $O_P \cap V_N$ and hence much smaller 
than the volume of $V_N$. The claim follows. 

By \eqref{WNsmall}, 
it is not possible that the conditional measures $\lambda^{sc}_q$ assign mass $\frac{1}{2}$, or more, to
$(V_N \setminus W_N) \cap W^{sc}_{loc} (q)$ for a nonzero proportion of points $q$  in $K_{N}$ as $N \to \infty$.
Namely, if $\lambda^{sc}_q ((V_N \setminus W_N) \cap W^{sc}_{loc}(q)) \ge \frac{1}{2}$ for $q \in K_N \setminus J$ and  
we pose $\nu^{sc} (K_{N} \setminus J) / \nu^{sc}(K_{N}) \ge \rho$ for some $\rho>0$, then
\begin{align*} \text{vol}(V_N \setminus W_N) &= \int_{K_{N}}  \lambda^{sc}_q ((V_N \setminus W_N) \cap W^{sc}_{loc}(q))\, d \nu^{sc} (q) \\ 
&\ge \int_{K_{N} \setminus J} \frac{1}{2}\, d \nu^{sc} (q) = \frac{1}{2} \nu^{sc}(K_{N} \setminus J) \ge \frac{1}{2} \rho \text{vol}(V_N),
\end{align*}
contradicting \eqref{WNsmall} for $N$ large. Because $\mu_q^+ (A) =  \lambda^{sc}_q  ( \cup_{p\in A} W^s_{loc} (p)  )$ for Borel sets $A\subset W^c(q)$,
the lemma follows.
\end{proof}

Consider $\Lambda_0$ as constructed in the proof of Lemma~\ref{l:delta2}, see \eqref{e:lambda0}, and
write 
 $\Sigma_0 = \Lambda_0 \cap W^{u}_{loc}(P)$.
 We may take $\Lambda_0$ so that the following statement holds, as 
follows from Pesin theory. We take the formulation from \cite[Lemma~6.6]{viayan10}.   
There is a homeomorphism $h: \Sigma_0 \times [-1,1]^2 \to \Lambda_0$, such that
\begin{enumerate}[label=(\roman*)]
 \item $h( \{x_u\} \times [-1,1]^2 ) \subset W^{sc}_{loc}  ( h(x_u,0,0) )$;
 \item \label{i:last2} there is $K>0$ so that for any transversals $\tau_1, \tau_2$ to the center stable foliation, near $K_0$, 
 the center stable foliation induces a holonomy map $h^{sc}$ from $\tau_1 \cap h(x_u \times [-1,1]^2)$ to
$\tau_2 \cap h(x_u \times [-1,1]^2)$ whose Jacobian is bounded by $K$ from above and $1/K$ from below.
\end{enumerate}
As a consequence, there is $\gamma >0$ so that for each transversal $\tau$ to the center stable foliation, near $K_0$,  
\begin{align} \label{e:gamma}
\lambda (\Lambda_0 \cap \tau) &> \gamma.
\end{align}

We claim that for $J \subset K_N$ as in Lemma~\ref{l:onehalf} and $N$ large enough, 
$F^N  (W^{sc}_{loc}(J))$ intersects $\Lambda_0$ in a set of positive Lebesgue measure.
This follows by combining Lemma~\ref{l:onehalf} and \eqref{e:gamma}. 
Namely, 
take a smooth foliation $\mathcal{G}$ of $S_1$ with curves transversal to the local center stable manifolds.
For a measurable set $A \subset S_1$, we can write $ \text{vol} (A) =  \int_{W^{sc}_{loc} (P)}  \lambda (\mathcal{G}_q \cap A ) \,dm(q)$  
for a smooth measure $m$.
By Lemma~\ref{l:onehalf}, $\text{vol} (   W^{sc}_{loc}(J)) / \text{vol} (V_N) > t$ for some $t$ close to one, if $N$ is large.
Write
\[
 \frac{\text{vol} (   W^{sc}_{loc}(J)) }{ \text{vol} (V_N) } =  \frac{ \text{vol} (   W^{sc}_{loc}(J) \cap W_N) }{ \text{vol} (V_N) } + 
 \frac{\text{vol} (   W^{sc}_{loc}(J) \cap (V_N \setminus W_N)) }{ \text{vol} (V_N) }
\]
and observe that the second term on the right hand side goes to zero as $N \to \infty$ by \eqref{WNsmall}.
Hence also $\text{vol} (   W^{sc}_{loc}(J) \cap W_N) / \text{vol} (V_N) > t$ for some $t$ close to one, if $N$ is large.
From this and 
\[ 
\text{vol} (W^{sc}_{loc} (J) \cap W_N) = \int_{W^{sc}_{loc} (P)}  \lambda (\mathcal{G}_q \cap W^{sc}_{loc} (J) \cap W_N)\, dm(q)
\] 
we find that if $N$ is large, 
$\lambda (\mathcal{G}_q \cap W^{sc}_{loc} (J)) / \lambda (\mathcal{G}_q \cap V_N )$ is close to one for some 
$q$ with $\mathcal{G}_q \cap V_N \subset W_N$. 

By bounded distortion \cite[Lemma~3.3]{MR1749677}, with $\lambda (\mathcal{G}_q \cap W^{sc}_{loc} (J)) / \lambda (\mathcal{G}_q \cap V_N )$ 
close to one, also  
$\lambda (F^N (\mathcal{G}_q \cap   W^{sc}_{loc}(J))) / \lambda(F^N(\mathcal{G}_q   \cap V_N))$ is close to one.
(Bounded distortion of $F^N$ on $\mathcal{G}_q$ means there is $C>0$ so that
\[
 \frac{1}{C} \le \frac{| DF^N (q_1) e^{u} |}{| DF^N(q_2)  e^{u} |} \le C,
\]
$q_1,q_2 \in \mathcal{G}_q$, uniformly in $N$,
where $e^{u}$ is a unit tangent vector to $\mathcal{G}_q$.
Consequently, iterating under $F^N$ does not change too much relative length of sets.)
By \eqref{e:gamma},  $F^N (\mathcal{G}_q \cap   W^{sc}_{loc}(J))$
has nonempty intersection, in fact with positive Lebesgue measure, with $\Lambda_0 \cap F^N(\mathcal{G}_q)$, for large enough $N$. 
By item~\ref{i:last2} above, this shows the claim.

The remainder of the proof again follows the  arguments right after Lemma~\ref{l:onehalfab},
 with a smaller set $\Lambda$ still of positive Lebesgue measure
($\Lambda_0$ being replaced by $F^N  (W^{sc}_{loc}(J) \cap \Lambda_0)$).
This concludes the proof of Theorem~\ref{t:openk=1}.
\end{proof}

\def\cprime{$'$}


\begin{thebibliography}{10}

{\tiny

\bibitem{arn98}
L.~Arnold.
\newblock {\em Random dynamical systems}.
\newblock Springer Monographs in Mathematics. Springer-Verlag, 1998.


\bibitem{barbon03}
A. T. Baraviera \& C. Bonatti.
\newblock Removing zero Lyapunov exponents.
\newblock  {\em Ergodic Theory Dynam. Systems} 23:1655--1670, 2003.



\bibitem{barpes07}
L.~Barreira \& Ya.~B. Pesin.
\newblock {\em Nonuniform hyperbolicity}.
\newblock Encyclopedia of Mathematics and Its Applications, Vol. 115. Cambridge University Press, 2007.

\bibitem{bel73}
G.~R.~Belitskii.
\newblock Functional equations and conjugacy of local diffeomorphisms of a finite smoothness class.
\newblock {\em Funct. Anal. Appl.} 7:268--277, 1973.

\bibitem{bon13}
D. Bohnet.
\newblock Codimension-$1$ partially hyperbolic diffeomorphisms with a uniformly compact center foliation.
\newblock {\em J. Mod. Dyn.} 7:565--604, 2013.

\bibitem{boncro04} 
C.~Bonatti \& S. Crovisier.
\newblock R\'ecurrence et g\'en\'ericit\'e.
\newblock {\em Invent. Math.} 158:33--104, 2004.



\bibitem{MR1954435}
C. Bonatti, L.~J. D{\'{\i}}az \& R. Ures.
\newblock Minimality of strong stable and unstable foliations for partially hyperbolic diffeomorphisms.  
\newblock {\em J. Inst. Math. Jussieu} 1:513--541, 2002.

\bibitem{bondiavia05}
C.~Bonatti, L.~J. D{\'{\i}}az \& M.~Viana.
\newblock {\em Dynamics beyond uniform hyperbolicity}.
\newblock Encyclopaedia of Mathematical Sciences, Vol. 102. Springer-Verlag, 2005.

 \bibitem{MR1749677}
 C.~Bonatti \& M.~Viana.
 \newblock S{RB} measures for partially hyperbolic systems whose central
   direction is mostly contracting.
 \newblock {\em Israel J. Math.} 115:157--193, 2000.


\bibitem{bonwil05}
C. Bonatti \& A. Wilkinson.
\newblock Transitive partially hyperbolic diffeomorphisms on $3$-manifolds.
\newblock {\em Topology} 44:475--508, 2005. 



\bibitem{MR2366230}
K.~Burns, D.~Dolgopyat, Ya. Pesin \& M.~Pollicott.
\newblock Stable ergodicity for partially hyperbolic attractors with negative
   central exponents.
\newblock {\em J. Mod. Dyn.} 2:63--81, 2008.

\bibitem{burwil10}
K.~Burns \& A.~Wilkinson.
\newblock On the ergodicity of partially hyperbolic systems.
\newblock {\em Ann. of Math.} 171:451--489, 2010.

\bibitem{cra90}
H.~Crauel.
\newblock Extremal exponents of random dynamical systems do not vanish.
\newblock {\em J. Dynam. Differential Equations} 2:245--291, 1990.

\bibitem{MR1239171}
W. de Melo \& S. van Strien.
\newblock {\em One-dimensional dynamics}.
\newblock Ergebnisse der Mathematik und ihrer Grenzgebiete, Vol. 25.
Springer-Verlag,  1993.


\bibitem{MR0501173}
M.~W. Hirsch, C.~C. Pugh \& M.~Shub.
\newblock {\em Invariant manifolds}.
\newblock Lecture Notes in Mathematics, Vol. 583.  Springer-Verlag, 1977.




\bibitem{mil97}
J. Milnor.
\newblock Fubini foiled: Katok's paradoxical example in measure theory.
\newblock {\em The Math. Intelligencer} 19:31--32, 1997.





\bibitem{MR2068774}
Ya.~B. Pesin.
\newblock {\em Lectures on partial hyperbolicity and stable ergodicity}.
\newblock Zurich Lectures in Advanced Mathematics. European Mathematical
  Society (EMS),  2004.

  \bibitem{pugshu04}
  C. Pugh \& M. Shub.
  \newblock Stable ergodicity.
  \newblock {\em Bull. Amer. Math Soc.} 41:1--41, 2004.
  
 \bibitem{pugshuwil97}
  C. Pugh, M. Shub \& A. Wilkinson.
  \newblock H\"older foliations.
  \newblock {\em Duke Math. J.} 86: 517--546, 1997.
\newblock With correction in {\em Duke Math. J.} 105: 105--106, 2000.
  
\bibitem{RH^2TU}
F. Rodriguez Hertz, M. A. Rodriguez Hertz, A. Tahzibi \& R. Ures.
\newblock Creation of blenders in the conservative setting.
\newblock {\em Nonlinearity} 23:211--223,  2010.  

\bibitem{RH^2U}
F. Rodriguez Hertz, M. A. Rodriguez Hertz \& R. Ures.
\newblock Accessibility and stable ergodicity for partially hyperbolic diffeomorphisms
with $1D$-center bundle.
\newblock  {\em Invent. Math.} 172:353--381, 2008.

\bibitem{rodrodure11}
F. Rodriguez Hertz, M. A. Rodriguez Hertz \& R. Ures.
\newblock 
 {\em Partially hyperbolic dynamics}.
 \newblock Publica\c{c}\~{o}es Matem\'{a}ticas do IMPA, 2011.





\bibitem{MR1838747}
D.~Ruelle \& A.~Wilkinson.
\newblock Absolutely singular dynamical foliations.
\newblock {\em Comm. Math. Phys.} 219:481--487, 2001.

\bibitem{MR1738057}
M.~Shub \& A.~Wilkinson.
\newblock Pathological foliations and removable zero exponents.
\newblock {\em Invent. Math.} 139:495--508, 2000.

\bibitem{viayan10}
M.~Viana \& J. Yang.
\newblock Physical measures and absolute continuity for one dimensional center direction.
\newblock {\em Annales Inst. Henri Poincar\'{e} - Analyse Non-Lin\'{e}aire} 30:845--877, 2013. 

\vspace{0cm}
}
\end{thebibliography}
\end{document}